\newtheorem{thm}{Theorem}[section]   
\newtheorem{lem}{Lemma}
\newtheorem{cor}{Corollary}
\newtheorem{Def}{Definition}
\theoremstyle{definition}
\newtheorem{rem}{Remark}
\newtheorem*{Proof}{Proof}
\newcommand{\dis}{\displaystyle}
\newcommand{\ra}{\;\rightarrow\;}
\newcommand{\bi}{\beta}
\newcommand{\de}{\delta }
\newcommand{\De} {{\varDelta}}
\newcommand{\la}{\lambda }
\newcommand{\mi}{\mu }
\newcommand{\oo}{\omega}
\newcommand{\R}{\mathbb{R}}
\newcommand{\Z}{\mathbb{Z}}
\newcommand{\N}{\mathbb{N}}
\newcommand{\intl}{\int\limits}
\newcommand{\ct}{{\mathcal{T}}}
\newcommand{\cm}{{\mathcal{M}}}
\newcommand{\ld}{\ldots}
\newcommand{\sm}{\smallsetminus}
 \newcommand{\loc}{\mbox{\footnotesize loc}}
\newcommand{\hs}{\hfill$\square$}
    \newcommand{\vs}{\vspace*{0.2cm} \\}
\begin{document}

\title[Dyadic Maximal Operators]{The Bellman function of the dyadic maximal operator related to Kolmogorov's inequality}
\author{Eleftherios N. Nikolidakis}\thanks{\hspace*{-0.5cm}{\em{Keywords}}: Bellman, dyadic, maximal\\
MSC Number: 42B25}
\date{}
\maketitle
\noindent
{\bf Abstract.} We precisely compute the Bellman function of two variables of the dyadic maximal operator in relation to Kolmogorov's inequality. In this way we give an alternative proof of the results in \cite{4}.\ Additionally we characterize the sequences of functions that are extremal for this Bellman function.\ The proof for this is based on that is given in this paper for the Bellman function we are interested in.
\section{Introduction}\label{sec1}
\noindent

The dyadic maximal operator is defined on $\R^n$ by
\begin{eqnarray}
\hspace*{1.5cm}\cm_d\phi(x)=\sup\bigg\{\frac{1}{|Q|}\int_Q|\phi(u)|du:x\in Q,\;Q\subseteq\R^n\;\text{is a dyadic cube}\bigg\},  \label{eq1.1}
\end{eqnarray}

for every $\phi\in L^1_{\loc}(\R^n)$, where the dyadic cubes are those formed by the grids $2^{-N}\Z^n$, for $N=0,1,2,\ld\;,$
and $|\cdot|$ is the Lesbesgue measure on $R^n$.
As it is well known it satisfies the following weak type (1,1) inequality
\begin{eqnarray}
\hspace*{1cm}|\{x\in\R^n:\cm_d\phi(x)>\la\}|\le\frac{1}{\la}\int_{\{\cm_d\phi>\la\}}
|\phi(u)|du  \label{eq1.2}
\end{eqnarray}
for every $\phi\in L^1(\R^n)$ and every $\la>0$, from which it is easy to get the following $L^p$-inequality:
\begin{eqnarray}
\|\cm_d\phi\|_p\le\frac{p}{p-1}\|\phi\|_p  \label{eq1.3}
\end{eqnarray}
for every $\phi\in L^p(\R^n)$, $p>1$.

It is easy to see that (\ref{eq1.2}) is best possible while it has also been proved that (\ref{eq1.3}) is sharp (see \cite{1} and \cite{2} for general martingales and \cite{16} for dyadic ones).

Our aim is to study the dyadic maximal operator and this can be done by finding refinements of the above inequalities.
Concerning (\ref{eq1.2}) refinements have been studied in \cite{8} and \cite{9}. For the study of (\ref{eq1.3}) the following function has been precisely computed in \cite{3}:
\begin{align}
B^Q_p(f,F)=\sup&\left\{\frac{1}{|Q|}\int_Q(\cm_d\phi)^p:\phi\ge0,\frac{1}{|Q|}
\int_Q\phi(u)du=f,\right. \nonumber \\
&\ \ \left.\frac{1}{|Q|}\int_Q\phi^p(u)du=F\right\}.  \label{eq1.4}
\end{align}
where $Q$ is a fixed dyadic cube on $\R^n$ and $f,F$ are variables satisfying: $0<f^p\le F$. It's exact value has been found to be equal to
\[
B^Q_p(f,F)=F\oo_p(f^p/F)^p
\]
where $\oo_p:[0,1]\ra\Big[1,\dfrac{p}{p-1}\Big]$ is the inverse function of $H_p$, which is given by $H_p(z)=-(p-1)z^p+pz^{p-1}$, for $z\in\Big[1,\dfrac{p}{p-1}\Big]$.\ After completing the case $p>1$ it is interesting to search for the case where $p=q<1$ and as it is well known it is connected with the following known as Kolmogorov's inequality
\begin{eqnarray}
\int_E(\cm_d\phi(u))^qdu\le\frac{1}{1-q}|E|^{1-q}\bigg(\int_{\R^n}|\phi|\bigg)^q \label{eq1.5}
\end{eqnarray}
for every $q\in(0,1)$, $\phi\in L^1(\R^n)$ and $E$ measurable subset of $\R^n$ with finite measure.

This inequality connects the $L^q$ norm of $\cm_d\phi$ upon subsets of $\R^n$ of finite measure with the $L^1$-norm of $\phi$ and the measure of the set. It was studied extensively in \cite{4} and it is proved there that it is sharp.\ More precisely a stronger result than it's  sharpness is proved, namely the exact evaluation of the following function of four variables $f,h,L,k$:
\begin{align}
&B_q(f,h,L,k)=\sup\left\{\frac{1}{|Q|}\int_E(\cm_d\phi)^q:\phi\ge0,\frac{1}{|Q|}
\int_Q\phi=f,\right. \nonumber \\
&\left.\frac{1}{|Q|}\int_Q\phi^q=h,\;\sup_{Q'\supseteq Q}\bigg(\frac{1}{|Q'|}\int_{Q'}\phi\bigg)=L,\; E\subseteq Q\;\text{measurable with $|E|=k$}\right\}  \label{eq1.6}
\end{align}
where $Q$ is a fixed dyadic cube, $Q'$ runs over all the dyadic cubes containing $Q$, $\phi\in  L^1(Q)$, $0<k\le|Q|$ and $f,h,L$ satisfy $0<f\le L$, $h\le f^q$.

It turns out that $(1.6)$ is independent of $Q$ so we can consider $Q=[0,1]^n$. More generally we consider a non-atomic probability measure space $(X,\mi)$ equipped with a tree structure $\ct$, which plays the role of the dyadic sets in our situation (see definition in Section \ref{sec2}).

Then the dyadic maximal operator $\cm_\ct$ is defined by:
\begin{eqnarray}
\cm_\ct\phi(x)=\sup\bigg\{\frac{1}{\mi(I)}\int_I|\phi|d\mi:\;x\in I\in\ct\bigg\} \label{eq1.5}
\end{eqnarray}
for every $\phi\in L^1(X,\mi)$.

It is not difficult to see that (\ref{eq1.2}) and (\ref{eq1.3}) remain true and sharp even in this more general setting.

We define now
\[
B'_q(f,h,L,k)=\sup\bigg\{\int_E\max(\cm_\ct\phi,L)^qd\mi,\;\phi\ge0,\;\phi\in L^1(X,\mi),
\]
\begin{eqnarray}
\int_X\phi d\mi=f,\;\int_X\phi^qd\mi=h,\;E\subseteq X\;\text{measurable with}\;
\mi(E)=k\bigg\}, \label{eq1.6}
\end{eqnarray}
where $L,f,h,k$ satisfy $L\ge f>0$, $0<h\le f^q$, $0<k\le1$.

Then it is true that $B'_q=B_q$ according to arguments given in \cite{3}.

The precise value of $B'_q$ has been found by working the respective Bellman function of two variables which is defined by,
\begin{eqnarray}
B_q(f,h)=\sup\bigg\{\int_X(\cm_\ct\phi)^qd\mi:\;\phi\ge0,\;\int_X\phi d\mi=f,\;\int_X\phi^qd\mi=h\bigg\}  \label{eq1.7}
\end{eqnarray}
with $0<h\le f^q$.

Several calculus arguments and the use of the value of (\ref{eq1.7}) in certain subsets of $X$ gives (\ref{eq1.6}) as is done in \cite{4}. We are thus interested in (\ref{eq1.7}). The result is the following:\vs
\noindent
{\bf Theorem 1.} {\em It is true that:
\begin{eqnarray}
B_q(f,h)=h\oo_q(f^q/h), \ \ \text{where} \ \ \oo_q:[1,+\infty)\ra[1,+\infty)  \label{eq1.8}
\end{eqnarray}
is defined by $\oo_q(z)=[H^{-1}_q(z)]^q$ where}
\[
H_q(z)=(1-q)z^q+qz^{q-1}, \ \ z\ge1.
\]

Our first aim in this paper is to give an alternative proof of Theorem 1.

Our second aim is to characterize the extremal sequences of functions concerning (\ref{eq1.7}).\ More precisely we will prove the following.\vspace*{0.2cm}\\
\noindent
{\bf Theorem 2.} {\em Let $\phi_n:(X,\mi)\ra\R^+$ be such that $\int\limits_X\phi_ndh=f$ and $\int\limits_X\phi^q_nd\mi=h$, for any $n\in\N$.\ Then the following are equivalent
\begin{enumerate}
\item[i)] $\dis\lim_n\int\limits_X(\cm_T\phi_n)^qd\mi=h\oo_q(f^q/h)$.
\item[ii)] $\dis\lim_n\int\limits_X|\cm_\ct\phi_n-c\phi_n|^qd\mi=0$, where $c=\oo_q(f^q/h)^{1/q}$.
\end{enumerate}

That is $\phi_n$ behaves approximately in $L^q$ like eigenfunction of $\cm_\ct$ for the eigenvalue $c$.}

We also remark that there are several problems in Harmonic Analysis were Bellman functions arise. Such problems (including the dyadic Carleson imbedding theorem and weighted inequalities) are described in \cite{12} (see also \cite{5}, \cite{6}) and also connections to Stochastic Optimal Control are provided, from which it follows that the corresponding Bellman functions satisfy certain nonlinear second-order PDEs. The exact evaluation of a Bellman function is a difficult task which is connected with the deeper structure of the corresponding Harmonic Analysis problem. Until now several Bellman functions have been computed (see \cite{1}, \cite{2}, \cite{3}, \cite{4}, \cite{5}, \cite{6}, \cite{7}, \cite{12}, \cite{13}, \cite{14}, \cite{15},). The exact evaluation of (\ref{eq1.7}) for $q>1$ has been also given in \cite{11} by L. Slavin, A. Stokolos and V. Vasyunin which linked the computation of it to solving certain PDEs of the Monge-Amp\`{e}re type and in this way they obtained an alternative proof of the results in \cite{3} for the Bellman functions related to the dyadic maximal operator.

The paper is organized as follows.

In Section \ref{sec2} we give some preliminary results and facts needed for use in the subsequent sections.
In Section \ref{sec3} we give a proof that the right side of (\ref{eq1.8}) is an upper bound for $\intl_X(\cm_\ct\phi)^qd\mi$.
In Section \ref{sec4} we give the sharpness of the result just mentioned.
In Section \ref{sec5} we prove Theorem 2.\
At last in Section \ref{sec6} we discuss further properties of certain extremal sequences for the Bellman function (1.9).

We need also to say that analogous results for the case $q>1$ are treated in \cite{7} but for the Bellman function of three variables. More precisely in \cite{7} it is proved a generalization of a symmetrization principle which is presented in \cite{10}.

\section{Preliminaries}\label{sec2}
\noindent

Let $(X,\mi)$ be a non-atomic probability measure space. We give the following.
\begin{Def}\label{Def2.1}
A set $\ct$ of measurable subsets of $X$ will be called a tree if it satisfies the following conditions
\begin{enumerate}
\item[i)] $X\in\ct$ and for every $I\in\ct$ we have that $\mi(I)>0$.
\item[ii)] For every $I\in\ct$ there corresponds a finite or countable subset $C(I)\subseteq\ct$ containing at least two elements such that
\begin{itemize}
\item[(a)] the elements of $C(I)$ are pairwise disjoint subsets of $I$
\item[(b)] $I=\cup C(I)$.
\end{itemize}
\item[iii)] $\ct=\bigcup\limits_{m\ge0}\ct_{(m)}$ where $\ct_{(0)}=\{X\}$ and $\ct_{(m+1)}=\bigcup\limits_{I\in\ct_{(m)}}C(I)$.
\item[iv)] We have that $\dis\lim_{m\ra\infty}\dis\sup_{I\in\ct_{(m)}}\mi(I)=0$.  \hs
\end{enumerate}
\end{Def}

Examples of trees are given in \cite{3}. The most known is the one given by the family of dyadic subcubes of $[0,1]^n$.

The following has been proved in \cite{10}.
\begin{thm}\label{thm2.1}
For any $g:(0,1]\ra\R^+$ non-increasing, every increasing function $G_1$ defined on $[0,+\infty)$ with non-negative values and every $k\in(0,1]$ the following holds:
\begin{align*}
&\sup\bigg\{\int_KG_1(\cm_\ct\phi)d\mi:\phi\ge0,\phi^\ast=g,\;K\;\text{measurable subset of $X$ with} \;\mi(K)=k\bigg\}\\
&=\int^k_0G_1\bigg(\frac{1}{t}\int^t_0g\bigg)dt.
\end{align*}
\hs
\end{thm}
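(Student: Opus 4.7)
The plan is to prove the two inequalities separately: the upper bound via a rearrangement argument exploiting the Calderón-Zygmund-type structure of $\cm_\ct$, and the lower bound via an explicit tree-adapted construction.

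For the upper bound, fix $\phi \ge 0$ with $\phi^* = g$ and measurable $K \subseteq X$ with $\mi(K) = k$. Since $G_1$ is increasing and non-negative, the standard rearrangement inequality yields
\[
\int_K G_1(\cm_\ct \phi)\,d\mi \;\le\; \int_0^k G_1\bigl((\cm_\ct\phi)^*(t)\bigr)\,dt.
\]
I would then establish the pointwise estimate $(\cm_\ct\phi)^*(t) \le \frac{1}{t}\int_0^t g(s)\,ds$. For each $\la > 0$, the level set $\{\cm_\ct\phi > \la\}$ is a disjoint union of maximal tree elements $I_j$ on each of which $\frac{1}{\mi(I_j)}\int_{I_j}\phi > \la$. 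Summing gives $\la\,\mi(\{\cm_\ct\phi > \la\}) \le \int_{\{\cm_\ct\phi > \la\}}\phi \le \int_0^{\mi(\{\cm_\ct\phi > \la\})} g$. Since $g$ is non-increasing, the function $s \mapsto \frac{1}{s}\int_0^s g$ is also non-increasing, so letting $\la \uparrow (\cm_\ct\phi)^*(t)$ delivers the claim; monotonicity of $G_1$ then closes the upper bound.

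For the lower bound, I would construct, for arbitrary $\e > 0$, an admissible pair $(\phi,K)$ whose integral is within $\e$ of the target. Pick a fine partition $1 = s_0 > s_1 > \cdots > s_N > 0$ with some $s_m = k$. Using tree axiom (iv) recursively, select a nested chain $X = I_0 \supsetneq I_1 \supsetneq \cdots \supsetneq I_N$ of tree elements (or finite unions thereof) with $\mi(I_j) = s_j$. On each annulus $I_{j-1}\setminus I_j$, build $\phi$, by a second application of the tree's refinability inside that annulus, so that $\phi|_{I_{j-1}\setminus I_j}$ is equimeasurable with $g|_{(s_j, s_{j-1}]}$, and analogously on $I_N$ with $g|_{(0,s_N]}$. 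Then $\phi^* = g$ globally, and $\frac{1}{\mi(I_j)}\int_{I_j}\phi = \frac{1}{s_j}\int_0^{s_j} g$, so $\cm_\ct\phi(x) \ge \frac{1}{s_j}\int_0^{s_j} g$ on $I_j$. Taking $K = I_m$ yields a lower Riemann sum for $\int_0^k G_1\bigl(\frac{1}{t}\int_0^t g\bigr)dt$, and refining the partition forces the gap below $\e$.

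The main obstacle is engineering the equimeasurability of $\phi|_{I_{j-1}\setminus I_j}$ with $g|_{(s_j, s_{j-1}]}$ while preserving the tree-adapted averaging that supplies the lower bounds on $\cm_\ct\phi$ over each $I_j$. This is handled by recursing the same nested-chain construction inside every annulus, which is legitimate since each $I_{j-1}\setminus I_j$ is, up to a finite union, a disjoint family of tree elements to which the subdivision axiom still applies. Once equimeasurability is arranged, the lower estimate on $\cm_\ct\phi|_{I_j}$ is automatic from the defining supremum (since $I_j$ is itself a tree element), and the remaining error analysis is a routine Riemann-sum convergence as the partition mesh tends to zero.
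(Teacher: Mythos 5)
Your upper bound is sound and is essentially the argument the paper itself gives in Lemma 6.1 for the special case $G_1(x)=x^q$, $K=X$ (the theorem as stated is only cited from \cite{10}): the weak type inequality plus the Hardy--Littlewood inequality $\int_{\{\cm_\ct\phi>\la\}}\phi\,d\mi\le\int_0^{\bi(\la)}g$ yield $\la\le\frac{1}{\bi(\la)}\int_0^{\bi(\la)}g$, and the monotonicity of $s\mapsto\frac{1}{s}\int_0^s g$ converts this into $(\cm_\ct\phi)^\ast(t)\le\frac{1}{t}\int_0^t g$; combined with $\int_K F\,d\mi\le\int_0^{\mi(K)}F^\ast$ and the monotonicity of $G_1$ this closes that half.

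The lower bound, however, has a genuine gap. You select a nested chain $X=I_0\supsetneq I_1\supsetneq\cdots\supsetneq I_N$ of \emph{tree elements} with $\mi(I_j)=s_j$ for an arbitrary partition $\{s_j\}$, and the whole estimate $\cm_\ct\phi\ge\frac{1}{\mi(I_j)}\int_{I_j}\phi$ on $I_j$ rests on $I_j$ being a single element of $\ct$. In a general tree (already for the dyadic tree on $[0,1]^n$, where elements have measure $2^{-nm}$) no tree element of measure $s_j$ need exist, and your fallback ``or finite unions thereof'' destroys exactly the property you invoke: if $I_j=\bigcup_k J_k$ is a disjoint union of tree elements, the maximal function on $J_k$ is only bounded below by the average of $\phi$ over $J_k$, not over $I_j$. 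The standard repair (used in \cite{3} and \cite{10}) is nontrivial: one proves that for every $I\in\ct$ and every $a\in(0,1)$ there is a pairwise disjoint subfamily of $\ct$ inside $I$ of total measure exactly $a\,\mi(I)$, and then builds $\phi$ by a self-similar recursion so that the average of $\phi$ over \emph{each individual} tree element $J_k$ of the $j$-th family equals $\frac{1}{s_j}\int_0^{s_j}g$; equimeasurability with $g$ and the Riemann-sum limit are then carried out for this recursive family construction, not for a single nested chain. Without this step your construction does not produce the claimed lower bounds on $\cm_\ct\phi$, so the sharpness half of the theorem is not established as written.
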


Here by $\phi^\ast$ we mean the decreasing rearrangement of $\phi$ defined by
\[
\phi^\ast(t)=\sup_{e\subset X\atop |e|=t}\inf_{x\in e}|\phi(x)|, \ \ 0<t\le1.
\]
which is a function equimeasurable to $\phi$, non-increasing and left continuous.

We remind that given a tree on $(X,\mi)$ we define the associated dyadic maximal operator as follows
\[
\cm_\ct\phi(x)=\sup\bigg\{\frac{1}{\mi(I)}\int_I|\phi|d\mi:\;x\in I\in\ct\bigg\}
\]
for every $\phi\in L^1(X,\mi)$.
\section{The Bellman function}\label{sec3}
\noindent

We are now able to prove the following
\setcounter{lem}{0}
\begin{lem}\label{lem3.1}
For every $q$ such that $0<q<1$ and every $f,h$ such that $0<h\le f^q$ we have that
\setcounter{equation}{0}
\begin{eqnarray}
\int_X(\cm_\ct\phi)^qd\mi\le h\oo_q\bigg(\frac{f^q}{h}\bigg)  \label{eq3.1}
\end{eqnarray}
for any $\phi\in L^1(X,\mi)$ such that $\phi\ge0$, $\intl_X\phi d\mi=f$, $\intl_X\phi^qd\mi=h$.
\end{lem}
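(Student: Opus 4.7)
The plan is to reduce the problem to a one-dimensional variational question via Theorem~\ref{thm2.1}, then to combine an exact integral identity with a H\"older-type inequality, and finally to extract the desired bound by a brief calculus argument. Applying Theorem~\ref{thm2.1} with $G_1(u)=u^q$ and $K=X$, and taking $g=\phi^\ast$, it suffices to prove
\[
I:=\int_0^1 u(t)^q\,dt \;\le\; h\,\oo_q(f^q/h), \qquad u(t):=\frac{1}{t}\int_0^t g(s)\,ds,
\]
for every non-increasing $g:(0,1]\ra\R^+$ with $\int_0^1 g=f$ and $\int_0^1 g^q=h$.

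Two ingredients will suffice. First, since $(tu(t))'=g(t)$ gives $tu'=g-u$, one has $\tfrac{d}{dt}(tu^q)=(1-q)u^q+qu^{q-1}g$; integrating on $[0,1]$ and using $tu(t)^q\big|_0^1=f^q$ (the contribution at $0$ vanishes since $tu(t)^q=G(t)^q t^{1-q}\le f^q t^{1-q}\to 0$) yields
\[
(1-q)I+qJ=f^q, \qquad J:=\int_0^1 u(t)^{q-1} g(t)\,dt.
\]
Second, I would factor $g^q=(u^{q-1}g)^q\cdot u^{q(1-q)}$ and apply H\"older with conjugate exponents $1/q$ and $1/(1-q)$, both greater than $1$ since $0<q<1$, to obtain
\[
h=\int_0^1 g^q\,dt \;\le\; \Big(\int_0^1 u^{q-1}g\,dt\Big)^{q}\Big(\int_0^1 u^q\,dt\Big)^{1-q}=J^q\,I^{1-q}.
\]

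To extract the bound, set $z:=H_q^{-1}(f^q/h)\ge 1$, so $f^q/h=H_q(z)$ and $\oo_q(f^q/h)=z^q$. With $a:=I/h$ and $b:=J/h$, the relations above become $(1-q)a+qb=H_q(z)$ and $b^q a^{1-q}\ge 1$. Substituting the first into the second yields $\Phi(a)\ge 1$, where
\[
\Phi(a):=\Big(\frac{H_q(z)-(1-q)a}{q}\Big)^{\!q} a^{1-q}, \qquad 0\le a\le \frac{H_q(z)}{1-q}.
\]
Logarithmic differentiation gives $(\log\Phi)'(a)\propto (H_q(z)-a)$, so $\Phi$ is unimodal with a unique maximum at $a=H_q(z)$ with value $H_q(z)$, and vanishes at both endpoints of its domain. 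A direct computation yields $\Phi(z^q)=(z^{q-1})^q\cdot z^{q(1-q)}=1$, and $z^q\ge H_q(z)$ for $z\ge 1$ (equivalent to $z\ge 1$), so $z^q$ is the larger of the two roots of $\Phi(a)=1$. Hence $\Phi(a)\ge 1$ forces $a\le z^q$, i.e.\ $I\le h z^q=h\,\oo_q(f^q/h)$.

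The most delicate point I anticipate is the final paragraph: identifying $z^q$ as the larger root of $\Phi(a)=1$ and using unimodality to trap $a$ on its left. This is precisely where the specific form of $\oo_q$ (and the extremal profile $g(t)\propto t^{-\alpha}$, for which equality holds throughout) enters. The other two steps are routine once the correct factorization $g^q=(u^{q-1}g)^q\cdot u^{q(1-q)}$ is guessed.
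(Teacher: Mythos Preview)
Your proof is correct and shares the paper's core strategy: relate $I=\int(\text{maximal})^q$ to $J=\int\phi\cdot(\text{maximal})^{q-1}$ through $f^q$, apply the H\"older factorization $\phi^q=\big(\phi\,(\text{maximal})^{q-1}\big)^q\cdot(\text{maximal})^{q(1-q)}$ to bound $h\le J^qI^{1-q}$, and finish with an algebraic step tied to $H_q$. The differences are tactical rather than conceptual. You first pass to the Hardy operator via Theorem~\ref{thm2.1} and derive the \emph{identity} $(1-q)I+qJ=f^q$ (this is exactly the paper's Lemma~\ref{lem3.2}, which the paper invokes only later for sharpness), whereas the paper works directly with $\cm_\ct\phi$ and the weak-type inequality~(\ref{eq1.2}) to obtain the corresponding \emph{inequality} $I\le\frac{1}{1-q}f^q-\frac{q}{1-q}J$; your route buys an equality at the cost of importing Theorem~\ref{thm2.1}, while the paper's is self-contained. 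Your closing argument via the unimodal function $\Phi$ is valid but more elaborate than needed: from $b^q a^{1-q}\ge1$ one gets $b\ge a^{1-1/q}$, and substituting into $(1-q)a+qb=f^q/h$ yields $(1-q)a+qa^{1-1/q}\le f^q/h$, which for $a\ge1$ is precisely $H_q(a^{1/q})\le f^q/h$ and hence $a\le\oo_q(f^q/h)$; the case $a<1$ is trivial. This is how the paper concludes.
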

\begin{Proof}
We set $I=\intl_X(\cm_\ct\phi)^qd\mi$. Then
\begin{align*}
I&=\int^{+\infty}_{\la=0}q\la^{q-1}\mi(\{x\in X:\cm_\ct\phi(x)>\la\})d\la\\
&=\int^f_{\la=0}+\int^{+\infty}_{\la=f}q\la^{q-1}\mi(\{x\in X:\;\cm_\ct\phi(x)>\la\})d\la=II+III, \ \ \text{where}
\end{align*}
\[
II=\int^f_{\la=0}q\la^{q-1}d\la=f^q, \ \ \text{and}
\]
\[
II=\int^{+\infty}_{\la=f}q\la^{q-1}\mi(\{x\in X:\;\cm_\ct\phi(x)>\la\})d\la.
\]
Now because of the weak type inequality (\ref{eq1.2}) for $\cm_\ct$ we have that
\begin{align*}
III&\le\int^{+\infty}_{\la=f}q\la^{q-1}\frac{1}{\la}\bigg(\int_{\{\cm_\ct\phi>\la\}}\phi
d\mi\bigg)d\la \\
&=\int^{+\infty}_{\la=f}q\la^{q-2}\bigg(\int_{\{\cm_\ct\phi>\la\}}\phi d\mi\bigg)d\la \ \ \text{(by Fubini's theorem)} \\
&=\int_X\phi(x)\frac{q}{q-1}\big[\la^{q-1}\big]^{\cm_\ct\phi(x)}_{\la= f}d\mi(x) \\
&=\frac{q}{1-q}f^q-\frac{q}{1-q}\int_X\phi(x)[\cm_\ct\phi(x)]^{q-1}d\mi(x).
\end{align*}

Thus we have that
\begin{eqnarray}
I=\int_X(\cm_\ct\phi)^qd\mi\le\frac{1}{1-q}f^q-\frac{q}{1-q}IV,  \label{eq3.2}
\end{eqnarray}
where
\[
IV=\int_X\phi(\cm_\ct\phi)^{q-1}d\mi.
\]

On the other hand we know from Holder's inequality that the following is true
\begin{eqnarray}
\int_X(\phi_1\phi_2)^qd\mi\le\bigg(\int_X\phi_1d\mi\bigg)^q\cdot\bigg(\int_X\phi^{q/(1-q)}_2d\mi
\bigg)^{1-q}  \label{eq3.3}
\end{eqnarray}
for any $\phi_1,\phi_2$ such that $\phi_1\in L^1(X,\mi)$, $\phi_2\in L^{q/(1-q)}(X,\mi)$, where $q\in(0,1)$.
We set $\phi_1=\phi(\cm_\ct\phi)^{q-1}$ and $\phi_2=(\cm_\ct\phi)^{1-q}$ in (\ref{eq3.3}), and we conclude that
\begin{align}
h&=\int_X\phi^qd\mi\le\bigg[\int_X\phi(\cm_\ct\phi)^{q-1}d\mi\bigg]^q\cdot
\bigg[\int_X(\cm_\ct\phi)^qd\mi\bigg]^{1-q} \nonumber\\
&=IV^q\cdot I^{1-q}\Rightarrow IV\ge h^{\frac{1}{q}}I^{1-\frac{1}{q}},  \label{eq3.4}
\end{align}
Thus (\ref{eq3.2}) in view of (\ref{eq3.4}) gives
\begin{align*}
&I\le\frac{1}{1-q}f^q-\frac{q}{1-q}h^{\frac{1}{q}}I^{1-\frac{1}{q}}\Rightarrow(1-q)
\frac{I}{h}\le\frac{f^q}{h}-q\bigg(\frac{I}{h}\bigg)^{1-\frac{1}{q}}\Rightarrow\bigg(u=\frac{I}{h}\bigg)\\
&qu^{1-\frac{1}{q}}+(1-q)u\le\frac{f^q}{h}\Rightarrow u\le\oo_q\bigg(\frac{f^q}{h}\bigg)
\end{align*}
in case where $u\ge1$, while $u\le 1\le\oo_q\Big(\dfrac{f^q}{h}\Big)$ in case where $u<1$, because of the definition of $\oo_q(z)$, $z\ge1$.

Thus we have that
\[
I=\int_X(\cm_\ct\phi)^qd\mi\le h\oo_q\bigg(\frac{f^q}{h}\bigg).
\]
and in this way we derive the proof of our Lemma. \hs
\end{Proof}

We will also need the following
\begin{lem}\label{lem3.2}
For any $g:(0,1]\ra\R^+$ non-increasing, with $\intl^1_0g(u)du=f$, and any $q$ such that $0<q<1$, the following equality holds:
\[
\int^1_0\bigg(\frac{1}{t}\int^t_0g\bigg)^qdt=\frac{1}{1-q}f^q-\frac{q}{1-q}\int^1_0
g(t)\bigg(\frac{1}{t}\int^t_0g\bigg)^{q-1}dt.
\]
\end{lem}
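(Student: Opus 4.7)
The plan is to recognize the identity as an integration by parts, with the judicious choice of antiderivative $\Phi(t) = t^{1-q}G(t)^q$, where $G(t) = \int_0^t g(s)\,ds$. Since $g \in L^1(0,1]$, $G$ is absolutely continuous on $[0,1]$ with $G(0)=0$, $G(1)=f$ and $G'(t)=g(t)$ almost everywhere. The key algebraic observations are that $t^{-q}G(t)^q = (G(t)/t)^q$ and $t^{1-q}G(t)^{q-1} = (G(t)/t)^{q-1}$, which is what allows $\Phi$ to interpolate between the two integrals appearing in the identity.

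Concretely, I would set $u(t)=G(t)^q$ and $dv = t^{-q}\,dt$, so that $du = q\,G(t)^{q-1}g(t)\,dt$ and $v(t)=t^{1-q}/(1-q)$. Integration by parts yields
\[
\int_0^1\Big(\tfrac{G(t)}{t}\Big)^q dt \;=\; \left[\tfrac{t^{1-q}G(t)^q}{1-q}\right]_0^1 \;-\; \tfrac{q}{1-q}\int_0^1 t^{1-q}G(t)^{q-1}g(t)\,dt.
\]
The integrand on the right simplifies via $t^{1-q}G(t)^{q-1} = (G(t)/t)^{q-1}$ to $g(t)(G(t)/t)^{q-1}$. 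At $t=1$ the boundary contributes $f^q/(1-q)$, while at $t\to 0^+$ I would use the crude bound $G(t)\le f$ together with $t^{1-q}\to 0$ (this is exactly where $0<q<1$ is needed) to conclude that $\Phi(t)\to 0$. Combining these produces the identity claimed in the lemma.

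The only genuinely delicate point is the vanishing of the boundary term at zero, but the factor $t^{1-q}$ with $q<1$ provides ample decay, even when $g$ has an integrable singularity at the origin (so that $G(t)/t$ may blow up). No deeper use of the monotonicity of $g$ is required beyond what is already implicit in the integrability of the expressions, so the argument reduces to the routine verification outlined above. Thus the main obstacle is essentially absent; the content of the lemma is the clever antiderivative $t^{1-q}G(t)^q$, and the rest is mechanical.
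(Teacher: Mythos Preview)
Your integration-by-parts argument is correct. Writing $G(t)=\int_0^t g$ and integrating $t^{-q}G(t)^q$ by parts on $[\e,1]$ (where $G^q$ is certainly absolutely continuous since $G$ is bounded away from zero), then letting $\e\to0^+$, yields exactly the stated identity; the boundary term at $\e$ vanishes because $G(\e)\le f$ and $\e^{1-q}\to0$. Your observation that monotonicity of $g$ plays no essential role is also correct: the identity holds for any nonnegative $g\in L^1(0,1]$.

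The paper takes a different route. It writes the left-hand side via the layer-cake formula as $f^q+\int_f^\infty q\la^{q-1}\big|\{t:\tfrac1t\int_0^tg>\la\}\big|\,d\la$, then uses the monotonicity of $g$ to identify the level set with an interval $(0,\bi(\la))$ determined by $\tfrac{1}{\bi(\la)}\int_0^{\bi(\la)}g=\la$; a Fubini argument then converts the resulting double integral into the right-hand side. This mirrors, step for step, the derivation of inequality~(\ref{eq3.2}) in Lemma~\ref{lem3.1} for the maximal operator $\cm_\ct$, and that structural parallel is the point: in Section~\ref{sec4} the sharpness argument hinges on the fact that the \emph{same} chain of manipulations produces an inequality for $\cm_\ct\phi$ and an equality for the Hardy operator. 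Your proof is shorter and more general, but it obscures this parallel; the paper's proof is longer and genuinely uses monotonicity, but makes the comparison with Lemma~\ref{lem3.1} immediate.
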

\begin{Proof}
The proof is similar to that of Lemma \ref{lem3.1}.

We set
\[
I=\int^1_0\bigg(\frac{1}{t}\int^t_0g\bigg)^qdt=f^q+\int^{+\infty}_{\la=f}q\la^{q-1}
\bigg|\bigg\{t\in(0,1]:\frac{1}{t}\int^t_0g>\la\bigg\}\bigg|d\la.
\]
We consider for every $\la>f$ the unique real number on $(0,1]$, $\bi(\la)$ such that $\dfrac{1}{\bi(\la)}\intl^{\bi(\la)}_0g=\la$ (without loss of generality $g(0+)=+\infty$, the finite case is treated similarly).

Then because of the monotonicity of $g$, for any $\la>f$
\[
\bigg\{t\in(0,1]:\frac{1}{t}\int^t_0g>\la\bigg\}=(0,\bi(\la)), \ \ \text{so that}
\]
\begin{align*}
I&=f^q+\int^{+\infty}_{\la=f}q\la^{q-1}\bi(\la)d\la
=f^q+\int^{+\infty}_{\la=f}q\la^{q-1}\frac{1}{\la}\bigg(\int^{\bi(\la)}_0
g(u)du\bigg)d\la \\
&=f^q+\int^{+\infty}_{\la=f}q\la^{q-2}\bigg(\int_{\big\{t:\dfrac{1}{t}\intl^t_0g>\la\big\}}g(u)du
\bigg)d\la=f^q+\int^1_0g(t)\frac{q}{q-1}\big[\la^{q-1}\big]^{\frac{1}{t}\intl^t_0g}_{\la=f}
d\la \\
&=\frac{1}{q-1}f^q-\frac{q}{1-q}\int^1_0g(t)\bigg(\frac{1}{t}\int^t_0g\bigg)^{q-1}dt,
\end{align*}
and Lemma \ref{lem3.2} is proved. \hs
\end{Proof}
\section{Sharpness of Lemma \ref{lem3.1}}\label{sec4}
\noindent

In the determination of the upper bound of $\intl_X(\cm_\ct\phi)^qd\mi$ in Lemma \ref{lem3.1} there are exactly two steps where inequalities are used.

The first is before we reach to the following inequality
\setcounter{equation}{0}
\begin{eqnarray}
\int_X(\cm_\ct\phi)^qd\mi\le\frac{1}{1-q}f^q-\frac{q}{1-q}\int_X\phi(\cm_\ct\phi)^{q-1}d\mi,
\label{eq4.1}
\end{eqnarray}
while by Lemma \ref{lem3.2} we have equality in the respective inequality for the Hardy operator, this is
\begin{eqnarray}
\int^1_0\bigg(\frac{1}{t}\int^t_0g\bigg)^qdt=\frac{1}{1-q}f^q-\frac{q}{1-q}
\int^1_0g(t)\bigg(\frac{1}{t}\int^t_0g\bigg)^{q-1}dt. \label{eq4.2}
\end{eqnarray}

We now use Theorem \ref{thm2.1} of Section \ref{sec2}, which states that
\begin{eqnarray}
\int_X(\cm_\ct\phi)^qd\mi\le\int^1_0\bigg(\frac{1}{t}\int^t_0g\bigg)^qdt,  \label{eq4.3}
\end{eqnarray}
with $\phi^\ast=g$, which is sharp when one considers all $\phi$ such that $\phi^\ast=g$.

Thus we observe that if we fix $g$, and leave $\phi$ run across all the rearrangements of $g$ we attain equality in the first inequality which we meet in Lemma \ref{lem3.1} and this is exactly  Lemma \ref{lem3.2}. As for the second step where an inequality was used in the proof of Lemma \ref{lem3.1} we need to mention the following.

Because of (\ref{eq3.3}) we have that
\begin{eqnarray}
\int^1_0g(t)\bigg(\frac{1}{t}\int^t_0g\bigg)^{q-1}dt\ge h^{\frac{1}{q}}\bigg[\int^1_0\bigg(\frac{1}{t}\int^t_0g\bigg)^qdt\bigg]^{1-\frac{1}{q}}. \label{eq4.4}
\end{eqnarray}
Now (\ref{eq4.2})-(\ref{eq4.4}) give for any $\phi:\phi^\ast=g$
\begin{eqnarray}
\hspace*{1cm}\int_X(\cm_\ct\phi)^qd\mi\le\int^1_0\bigg(\frac{1}{t}\int^t_0g\bigg)^qdt=I_g \le\frac{1}{1-q}f^q-\frac{q}{1-q}h^{\frac{1}{q}}I^{1-\frac{1}{q}}_g.  \label{eq4.5}
\end{eqnarray}
(\ref{eq4.5}) now gives as in the proof of Lemma \ref{lem3.1} that $I_g\le h\oo_q(f^q/h)$.

So if we want to attain equality in the last relation we need equality on (\ref{eq4.4}) that is we must find a non-increasing function $g$ for which we have equality in a Holder type inequality. Thus this function should satisfy the following
\[
\frac{1}{t}\int^t_0g=cg(t), \ \ t\in(0,1]
\]
for some constant $c$. If additionally $\int^1_0g=f$, $\int^1_0g^q=h$ and $c=\oo_q\Big(\dfrac{f^q}{h}\Big)^{1/q}$, then in view of the above discussion we will have that
\[
\sup_{\phi^\ast=g}\int_X(\cm_\ct\phi)^qd\mi=\oo_q(f^q/h)\cdot\int^1_0g^q=
h\oo_q\bigg(\frac{f^q}{h}\bigg), \ \ \text{for that $g$}.
\]
Thus the following will give the sharpness of Lemma \ref{lem3.1}.
\setcounter{lem}{0}
\begin{lem}\label{lem4.1}
For any $f,h$: $0<h\le f^q$ there exists $g:(0,1]\ra\R^+$ non-increasing and continuous such that the following hold $\int^1_0g(u)du=f$, $\int^1_0g^q(u)du=h$ and
\[
\frac{1}{t}\int^t_0g(u)du=\oo_q\bigg(\frac{f^q}{h}\bigg)^{1/q}g(t), \ \ t\in(0,1].
\]
\end{lem}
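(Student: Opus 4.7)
The plan is to solve the functional equation $\frac{1}{t}\int_0^t g = c\,g(t)$, with $c:=\oo_q(f^q/h)^{1/q}$, explicitly by turning it into a first-order ODE and then choosing the single free constant so that both moment conditions $\int_0^1 g=f$ and $\int_0^1 g^q=h$ hold simultaneously. Rewriting the equation as $\int_0^t g(u)\,du = c t\,g(t)$ and differentiating (legitimate once $g$ is continuous) gives $g(t) = c\,g(t) + c t\,g'(t)$, i.e.\ $tg'(t)/g(t) = (1-c)/c$, whose positive solutions are exactly the power functions $g(t) = A\,t^\al$ with $\al := 1/c - 1$. I therefore take $g(t) = A t^\al$ as a candidate and verify everything by direct computation.

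Before fixing $A$ I would verify the qualitative conditions on $g$. Since $H_q(1)=1$ and $H_q'(z) = q(1-q)z^{q-2}(z-1) \ge 0$ on $[1,+\infty)$, the map $H_q$ sends $[1,\infty)$ monotonically onto $[1,\infty)$, so $\oo_q \ge 1$ there; combined with the hypothesis $h\le f^q$, which means $f^q/h\ge 1$, this forces $c\ge 1$. Consequently $\al \in (-1,0]$, which makes $g(t)=At^\al$ continuous and non-increasing on $(0,1]$, and both $1+\al = 1/c > 0$ and $1+q\al = 1-q + q/c > 0$, so $\int_0^1 g$ and $\int_0^1 g^q$ both converge.

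Finally I would impose the moment conditions. Normalizing $\int_0^1 g = A/(1+\al) = A c = f$ forces $A = f/c$, after which $\int_0^1 g^q = A^q/(1+q\al)$ simplifies to $(f/c)^q\,c/\bigl(c(1-q)+q\bigr)$. Demanding that this equal $h$ reduces, after clearing denominators, to the identity $f^q/h = (1-q)c^q + q c^{q-1} = H_q(c)$. This is precisely what the definition $c = \oo_q(f^q/h)^{1/q}$ encodes: from $c^q = \oo_q(f^q/h) = [H_q^{-1}(f^q/h)]^q$ we read $c = H_q^{-1}(f^q/h)$, hence $H_q(c) = f^q/h$. The concluding verification $\frac{1}{t}\int_0^t g(u)\,du = \frac{A t^\al}{\al+1} = A c\,t^\al = c\,g(t)$ is then one line. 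The only substantive point is this algebraic coincidence between the ODE constraint and the definition of $\oo_q$; once the definition is unwound it is essentially tautological, so I do not expect any serious obstacle and the rest is bookkeeping.
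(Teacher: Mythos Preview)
Your proof is correct and essentially identical to the paper's: both produce the same explicit function $g(t)=Kt^{-1+1/c}$ with $K=f/c$ and verify $\int_0^1 g^q=h$ via the defining relation $H_q(c)=f^q/h$. The only difference is presentational---you motivate the power-function ansatz by solving the ODE and also spell out why $c\ge 1$ (hence $g$ is non-increasing and the integrals converge), whereas the paper simply writes down $g$ and checks the moments.
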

\begin{Proof}
We define $g(t)=Kt^{-1+\frac{1}{c}}$, $t\in(0,1]$, where $c=\oo_q\Big(\dfrac{f^q}{h}\Big)^{1/q}$.
Thus $c$ satisfies $(1-q)c^q+qc^{q-1}=\dfrac{f^q}{h}$.

Let $K$ be such that
\[
\int^1_0g=f\Leftrightarrow K\int^1_0t^{-1+\frac{1}{c}}dt=f\Leftrightarrow Kc=f\Leftrightarrow K=\frac{f}{c}.
\]
For this $K$ we claim that $\intl^1_0g^q=h$. Indeed:
\[
\int^1_0g^q=K^q\int^1_0t^{-q+\frac{q}{c}}dt=\frac{f^q}{c^q}\frac{1}{\Big(1-q
+\dfrac{q}{c}\Big)}
=\frac{f^q}{(1-q)c^q+qc^{q-1}}=\frac{f^q}{f^q/h}=h,
\]
and Lemma \ref{lem4.1} is proved.  \hs
\end{Proof}

From all the above we conclude Theorem 1.
\section{Characterization of the extremal sequences}\label{sec5}
\noindent
{\bf Proof of Theorem 2.} We consider $\phi_n:(X,\mi)\ra\R^+$ such that the hypotheses of Theorem 2 are satisfied.\ That is
\[
\int_X\phi_nd\mi=f,\ \ \int_X\phi^q_nd\mi=h \ \ \text{and} \ \ \lim_n
\int_X(\cm_\ct\phi_n)^qd\mi=h\oo_q(f^q/h).
\]
We will prove that $\dis\lim_n\int\limits_X|\cm_\ct\phi_n-c\phi_n|^qd\mi=0$, where $c=\oo_q(f^q/h)^{1/q}$.

By setting $\De_n=\{\cm_\ct\phi_n\ge c\phi_n\}$ and $\De'_n=X\sm \De_n$, it is enough to prove that if $I_n$ and $J_n$ are defined as
\[
I_n=\int_{\De_n}(\cm_\ct\phi_n-c\phi_n)^qd\mi \ \ \text{and} \ \ J_n=\int_{\De'_n}
(c\phi_n-\cm_\ct\phi_n)^qd\mi
\]
then $I_n,J_n\ra0$, as $n\ra\infty$.

Define the following functions on $(X,\mi)$
\[
g_n=\phi^q_n(\cm_\ct\phi_n)^{q(q-1)} \ \ \text{and} \ \ h_n=(\cm_\ct\phi_n)^{q(1-q)}.
\]
Remember that in the proof of Theorem 1 in Section \ref{sec3} it is used the inequality:
\setcounter{equation}{0}
\begin{eqnarray}
\hspace*{1cm}\int_X\phi^qd\mi\le\bigg[\int_X\phi(\cm_\ct\phi)^{q-1}d\mi\bigg]^q\cdot\bigg[
\int_X(\cm_\ct\phi)^qd\mi\bigg]^{1-q},  \label{eq5.1}
\end{eqnarray}
for every suitable $\phi$.

Thus since $(\phi_n)$ is extremal for (\ref{eq1.7}) we must have equality in (\ref{eq5.1}) in the limit if $\phi$ is replaced by $\phi_n$.\ We can write:
\begin{eqnarray}
\hspace*{1cm}\int_X g_n\cdot h_nd\mi\approx\bigg[\int_Xg^{1/q}_nd\mi\bigg]\cdot\bigg[
\int_Xh^{1/1-q}_nd\mi\bigg]^{1-q}.  \label{eq5.2}
\end{eqnarray}
We need now two lemmas before we proceed to the proof of Theorem 2. The first one is the following
\setcounter{lem}{0}
\begin{lem}\label{lem5.1}
Under the above notation and hypotheses we have that:
\begin{eqnarray}
\hspace*{1cm}\int_{X_n}g_nh_hd\mi\approx\bigg[\int_{X_n}g^{1/q}_nd\mi\bigg]^q\cdot\bigg[
\int_{X_n}h^{1/1-q}_nd\mi\bigg]^{1-q},  \label{eq5.3}
\end{eqnarray}
where $X_n$ may be replaced either by $\De_n$ or $\De'_n$.
\end{lem}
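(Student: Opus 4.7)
The plan is to sandwich $\int_X g_n h_n\,d\mi$ between two expressions: one obtained by applying inequality (3.3) piecewise on $\De_n$ and $\De_n'$, the other by gluing those two pieces with a discrete H\"older inequality. The asymptotic equality (5.2) then forces equality in each piece.

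First I would apply (3.3) on each of $\De_n$ and $\De_n'$ separately, with $\phi_1=\phi_n(\cm_\ct\phi_n)^{q-1}$ and $\phi_2=(\cm_\ct\phi_n)^{1-q}$, to obtain
\[
\int_{X_n^{(i)}}g_n h_n\,d\mi\;\le\;A_i^{q}\,B_i^{1-q},\qquad i=1,2,
\]
where $X_n^{(1)}=\De_n$, $X_n^{(2)}=\De_n'$, $A_i=\int_{X_n^{(i)}}g_n^{1/q}d\mi$, $B_i=\int_{X_n^{(i)}}h_n^{1/(1-q)}d\mi$. Next, the discrete H\"older inequality with conjugate exponents $1/q$ and $1/(1-q)$, applied to $(A_1^q,A_2^q)$ and $(B_1^{1-q},B_2^{1-q})$, yields $A_1^{q}B_1^{1-q}+A_2^{q}B_2^{1-q}\le(A_1+A_2)^{q}(B_1+B_2)^{1-q}$. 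Combining the two estimates one gets the three-term chain
\[
\int_X g_n h_n\,d\mi\;\le\;A_1^{q}B_1^{1-q}+A_2^{q}B_2^{1-q}\;\le\;\Bigl(\int_X g_n^{1/q}d\mi\Bigr)^{q}\Bigl(\int_X h_n^{1/(1-q)}d\mi\Bigr)^{1-q}.
\]

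Next I would verify that both ends of this chain coincide in the limit. The left-hand side equals the constant $\int_X\phi_n^{q}\,d\mi=h$, while the right-hand side is $IV_n^{\,q}\,I_n^{\,1-q}$ in the notation of Lemma \ref{lem3.1}. The bounds $h^{1/q}I_n^{1-1/q}\le IV_n\le(f^{q}-(1-q)I_n)/q$ established in the proof of Lemma \ref{lem3.1} collapse to a common value precisely when $I_n\to h\oo_q(f^{q}/h)$ (this is the equality case that defines $\oo_q$), forcing $IV_n\to h^{1/q}\bigl(h\oo_q(f^{q}/h)\bigr)^{1-1/q}$, and hence $IV_n^{\,q}I_n^{\,1-q}\to h$. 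So the total gap across the chain tends to zero.

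Finally, since the two inequalities in the chain are each nonnegative and their sum vanishes in the limit, each vanishes separately. In particular
\[
\int_{X_n^{(i)}}g_n h_n\,d\mi\;\le\;A_i^{q}\,B_i^{1-q}
\]
becomes an asymptotic equality for each $i=1,2$, which is exactly (5.3). The main obstacle I expect is the bookkeeping in the middle step, namely confirming that $IV_n^{\,q}I_n^{\,1-q}\to h$ from only the extremality of $I_n$ together with the sandwich bounds and the defining relation of $\oo_q$; once that is in hand, the splitting argument that isolates $\De_n$ and $\De_n'$ is routine.
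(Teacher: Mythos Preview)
Your proposal is correct and follows essentially the same route as the paper: apply H\"older on $\De_n$ and $\De_n'$ separately, combine via the two-term inequality $A_1^qB_1^{1-q}+A_2^qB_2^{1-q}\le(A_1+A_2)^q(B_1+B_2)^{1-q}$ (which the paper proves directly as (5.7) rather than citing discrete H\"older), and use the asymptotic equality (5.2) at the ends of the chain to force each intermediate gap to vanish. Your explicit verification that $IV_n^{\,q}I_n^{\,1-q}\to h$ simply reproves (5.2), which the paper records just before stating the lemma.
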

\begin{Proof}
Of course the following inequalities hold true, in view of Holder's inequality.\ These are:
\begin{eqnarray}
\hspace*{1cm}\int_{\De_n}g_nh_nd\mi\le\bigg[\int_{\De_n}g^{1/q}_nd\mi\bigg]^q\cdot\bigg[
\int_{\De_n}h^{1/1-q}_nd\mi\bigg]^{1-q}, \ \ \text{and} \label{eq5.4}
\end{eqnarray}
\begin{eqnarray}
\hspace*{1cm}\int_{\De'_n}g_nh_nd\mi\le\bigg[\int_{\De'_n}g^{1/q}_nd\mi\bigg]^q\cdot
\bigg[\int_{\De'_n}h^{1/(1-q)}_nd\mi\bigg]^{1-q}. \label{eq5.5}
\end{eqnarray}
We add then and we obtain
\begin{align}
\int_X g_nh_nd\mi\le&\,\bigg[\int_{\De_n}g^{1/q}_nd\mi\bigg]^q\cdot[\int_{\De_n}
h^{1/1-q}_nd\mi\bigg]^{1-q}\nonumber\\
&+\bigg[\int_{\De'_n}g^{1/q}_nd\mi\bigg]^q\cdot
\bigg[\int_{\De'_n}h^{1/1-q}_nd\mi\bigg]^{1-q}.  \label{eq5.6}
\end{align}
We use now the following elementary inequality which proof is given below:\vspace*{0.2cm}

For every $t,t'\ge0$, $s,s'\ge0$ such that $t+t'=a>0$ and $s+s'=b>0$ and any $q\in(0,1)$, we have that
\begin{eqnarray}
\hspace*{1cm}t^q\cdot s^{1-q}+(t')^q\cdot(s')^{1-q}\le a^qb^{1-q}.  \label{eq5.7}
\end{eqnarray}
Applying it on (\ref{eq5.6}) we obtain
\[
\hspace*{0.5cm}\int_Xg_nh_nd\mi\le\bigg[\int_Xg^{1/q}_nd\mi\bigg]^q\cdot\bigg[
\int_Xh^{1/(1-q)}_nd\mi\bigg]^{1-q}
\]
which is equality in the limit.\ As a consequence we must have equality in the limit on (\ref{eq5.4}) and (\ref{eq5.5}) and Lemma \ref{lem5.1} follows.\ It remains to prove the inequality (\ref{eq5.7}).

Fix $t$ such that $0<t<a$ and consider the function $F$ of the variable $s\in[0,b]$ defined by
\[
F(s)=t^q\cdot s^{1-q}+(a-t)^q(b-s)^{1-q}.
\]
It can be easily seen that $F$ is strictly increasing on $\Big[0,t\dfrac{b}{a}\Big]$ and strictly decreasing on $\Big[t\dfrac{b}{a},b\Big]$.\ Thus it attains it's maximum value on $t\dfrac{b}{a}$.\ This maximum value equals to $F\Big(t\dfrac{b}{a}\Big)=a^qb^{1-q}$, thus our inequality is proved. \hs
\end{Proof}

We state now the following:
\begin{lem}\label{lem5.2}
We suppose we are given $w_n:X_n\ra\R^+$ where $X_n\subseteq X$ for any $n\in\N$ such that $w_n\ge w$ on $X_n$ where $w$ is defined on $X$ with non-negative values.\ Suppose also that $q\in(0,1)$ and $\dis\lim_n\int\limits_{X_n}w^q_nd\mi=\dis\lim_n\int\limits_{X_n}w^qd\mi$.\ Then the following is true:
\[
\lim_n\int_{X_n}(w_n-w)^qd\mi=0.
\]
\end{lem}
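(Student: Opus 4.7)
The plan is to derive a pointwise bound of the form
\[
(w_n - w)^q \le \frac{1}{q^q}\, w_n^{q(1-q)}\,(w_n^q - w^q)^q \quad \text{on } X_n,
\]
and then apply H\"older's inequality with conjugate exponents $1/(1-q)$ and $1/q$ to conclude
\[
\int_{X_n}(w_n - w)^q d\mi \le \frac{1}{q^q}\left(\int_{X_n} w_n^q d\mi\right)^{1-q}\left(\int_{X_n}(w_n^q - w^q) d\mi\right)^q.
\]
By hypothesis the second factor tends to $0$ while the first stays bounded (the common limit $\lim_n \int_{X_n} w^q d\mi$ being finite in the applications we care about), so the left-hand side vanishes in the limit.

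For the pointwise bound, at any point with $w_n > 0$ I would apply the mean value theorem to $t \mapsto t^q$ on the interval $[w, w_n]$: there exists $\xi \in (w, w_n)$ with $w_n^q - w^q = q\xi^{q-1}(w_n - w)$. Since $q - 1 < 0$ and $\xi < w_n$, one has $\xi^{q-1} > w_n^{q-1}$, giving
\[
w_n - w \le \frac{1}{q}\, w_n^{1-q}(w_n^q - w^q);
\]
raising to the $q$-th power yields the desired pointwise estimate. At the remaining points where $w_n = 0$ we also have $w = 0$, so both sides vanish trivially.

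The main obstacle is that the natural pointwise inequality works against us: since $t \mapsto t^q$ is concave for $0 < q < 1$, subadditivity gives $w_n^q \le (w_n - w)^q + w^q$, i.e.\ $(w_n - w)^q \ge w_n^q - w^q$, which is the \emph{wrong} direction for a direct pointwise comparison. The mean-value-theorem step above extracts the reverse inequality at the cost of an extra factor $w_n^{1-q}$, and H\"older's inequality neatly absorbs that factor into the bounded quantity $\int_{X_n} w_n^q d\mi$, leaving only the vanishing quantity $\int_{X_n}(w_n^q - w^q) d\mi$ to drive the bound to zero.
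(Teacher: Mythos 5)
Your proof is correct and follows essentially the same route as the paper: the identical pointwise bound $w_n-w\le \frac{1}{q}w_n^{1-q}(w_n^q-w^q)$ (the paper derives it by applying the mean value theorem to $t\mapsto t^{1/q}$ with $z_n=w_n^q$, you apply it to $t\mapsto t^q$ directly -- the same estimate), followed by the same H\"older step with exponents $1/q$ and $1/(1-q)$. Nothing further is needed.
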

\begin{Proof}
We set $z_n=w^q_n$ and $z=w^q$ defined on $X_n$ and $X$ respectively.\ We use now the inequality:
\[
x^p-y^p\le px^{p-1}(x-y), \ \ \text{for} \ \ x>y>0, \ \ \text{and} \ \ p>1
\]
which can be proved easily by the mean value theorem on derivatives.

We apply it in case where $p=1/q$.\ Thus we have that
\begin{align*}
w_n-w&=z^p_n-z^p\le pz^{p-1}_n(z_n-z) \\
&=\frac{1}{q}z_n^{\frac{1}{q}-1}(z_n-z), \ \ \text{on} \ \ X_n.
\end{align*}
This gives us
\begin{align*}
\int_{X_n}(w_n-w)^qd\mi&\le\bigg(\frac{1}{q}\bigg)^q\int_Xz^{1-q}_n(z_n-z)^qd\mi \\
&\le\bigg(\frac{1}{q}\bigg)^q\bigg[\int_{X_n}(z_n-z)d\mi\bigg]^q\cdot\bigg[
\int_{X_n}z_n\bigg]^{1-q},
\end{align*}
which is obviously tending to 0 by the hypotheses of the Lemma.\ Note that in the last inequality we use Holder's inequality with exponents $p=1/q$ and $p'=\dfrac{1}{1-q}$.\ Lemma \ref{lem5.1} is now proved. \hs
\end{Proof}

We are now able to continue with the proof of Theorem 2.\vspace*{0.2cm}

We set $\la=\dis\lim_n\Big(\dfrac{h}{\int\limits_X g_n^{1/q}d\mi}\Big)^{1/(1-q)}$ or equivalently:
\[
\la^{1-q}=\lim_n\frac{h}{\int\limits_Xg^{1/q}_nd\mi}.
\]
In view of the equality (\ref{eq5.2}) we must have that
\begin{align*}
\lim_n\int_Xg^{1/q}_nd\mi&=\lim_n\int_X\phi_n(\cm_\ct\phi_n)^{q-1}d\mi \\
&=\frac{h^{1/q}}{\dis\lim_n\Big[\int\limits_X(\cm_\ct\phi_n)^qd\mi\Big]^{(1-q)/q}}=
\frac{h^{1/q}}{\Big[h\oo_q(f^q/h)\Big]^{1/q-1}} \\
&=h\oo_q(f^q/h)^{1-\frac{1}{q}}.
\end{align*}
Thus $\la=\oo_q(f^q/h)^{1/q}=c$.

We remind that $I_n=\int\limits_{\De_n}(\cm_\ct\phi_n-c\phi_n)^qd\mi$, where $\De_n=\{\cm_\ct\phi_n\ge c\phi_n\}$.\ Because now of Lemma 5.2 we have that $I_n\ra0$ if we are able to show that
\begin{eqnarray}
\lim_n\int_{\De_n}(\cm_\ct\phi_n)^qd\mi=c^q\lim_n\int_{\De_n}\phi^q_nd\mi. \label{eq5.8}
\end{eqnarray}
We suppose (by passing to a subsequence if necessary) that
\begin{eqnarray}
\de=\lim\mi(\De_n)\in(0,1).  \label{eq5.9}
\end{eqnarray}
We will discuss the alternative case $\de=0$ or 1 at the end of this section.

We set now
\[
\la_n=\frac{\int\limits_{\De_n}\phi^q_nd\mi}{\int\limits_{\De_n}(\cm_\ct\phi_n)^qd\mi}
\ \ \text{and} \ \ \mi_n=\frac{\int\limits_{\De'_n}\phi^q_nd\mi}{\int\limits_{\De'_n}(\cm_\ct\phi_n)^qd\mi}.
\]
In view of (\ref{eq5.9}) $\la_n,\mi_n$ are well defined for all large n since $\cm_\ct\phi_n\ge f>0$ on $X$.

We set $\la_n=\dfrac{a_n}{b_n}$ and $\mi_n=\dfrac{c_n}{d_n}$ with the obvious meaning on these parameters and suppose without loss of generality that $a_n\ra a_1$, $b_n\ra b_1$, $c_n\ra c_1$ and $d_n\ra d_1$.
Then according to (\ref{eq5.9}) we have that $c_1, d_1>0$.

Because of the definition of $\De_n$ and $\De'_n$ we see immediately that
\begin{eqnarray}
\la_n\le\frac{1}{c^q}\le\mi_n,  \label{eq5.10}
\end{eqnarray}

In order to prove (\ref{eq5.8}) and the respective equality in the case of $J_n$ we need to prove that $\la_n\ra1/c^q$ and $\mi_n\ra1/c^q$.\ So we just need to prove that $\mi_n-\la_n\ra0$. We proceed to this proof as follows

By Section \ref{sec3} we see after replacing $\phi$ by $\phi_n$ that:
\begin{align}
I&=\int_X(\cm_\ct\phi_n)^qd\mi\le\frac{1}{1-q}f^q-\frac{q}{1-q}\int_X\phi_n(\cm_\ct\phi_n)^{q-1}
d\mi \nonumber \\
&=\frac{1}{1-q}f^q-\frac{q}{1-q}\bigg[\int_{\De_n}+\int_{\De'_n}\phi_n
(\cm_\ct\phi_n)^{q-1}d\mi\bigg].  \label{eq5.11}
\end{align}
By using Lemma \ref{lem5.1} and since $(\phi_n)_n$ is extremal for (\ref{eq1.7}), we conclude that
\begin{align}
h\oo_q(f^q/h)\le\frac{f^q}{1-q}-\frac{q}{1-q}\lim_n&\left[\frac{\Big(\int\limits_{\De_n}
\phi^q_nd\mi\Big)^{1/q}}{\Big(\int\limits_{\De_n}(\cm_\ct\phi_n)^qd\mi\Big)^{1/q-1}}\right.\nonumber\\
&\ \ \left.+
\frac{\Big(\int\limits_{\De'_n}\phi^q_nd\mi\Big)^{1/q}}{\Big(\int\limits_{\De'_n}
(\cm_\ct\phi_n)^qd\mi\Big)^{1/q-1}}\right] \label{eq5.12}
\end{align}
We use now Holder's inequality in it's primitive form
\[
\frac{(x+y)^p}{(s+t)^{p-1}}\le\frac{x^p}{s^{p-1}}+\frac{y^p}{t^{p-1}},
\]
for $x,y\ge0$ and $s ,t>0$, $p>1$, which is equality if and only if $\dfrac{x}{s}=\dfrac{y}{t}=k\in\R^+$.

We thus have for $p=1/q>1$, that the expression in brackets in (5.12) is not less than $\dfrac{h^{1/q}}{\Big(\int\limits_X(\cm_\ct\phi_n)^qd\mi\Big)^{1/q-1}}$ which tends to $h\oo_q(f^q/h)^{1-\frac{1}{q}}$.\ So from (5.12) we obtain that
\begin{align}
&h\oo_q(f^q/h)\le\frac{f^q}{1-q}-\frac{q}{1-q}h\oo_q(f^q/h)^{1-\frac{1}{q}}\Leftrightarrow \nonumber\\
&q\oo_q(f^q/h)^{1-\frac{1}{q}}+(1-q)\oo_q(f^q/h)\le f^q/h.  \label{eq5.13}
\end{align}
But by the definition of $\oo_q(z)$, $z\ge1$ we have that (5.13) is equality.\ As a consequence of all the above we conclude that $\dfrac{a_1}{b_1}=\dfrac{c_1}{d_1}\in\R^+$, that is what exactly we wanted to show.

The case $\mi(\De_n)\ra0$ is treated in a similar but more simple way since then
\begin{eqnarray}
\lim_n\int_{\De_n}(\cm_\ct\phi_n)^qd\mi=0.  \label{eq5.14}
\end{eqnarray}
This is true since if we define
\begin{align*}
B_q(f,h,k)=\sup&\bigg\{\int_K(\cm_\ct\phi)^qd\mi:\;\phi\ge0,\int_X\phi d\mi=f,\;\int_X\phi^qd\mi=h, \\
&\ \ K:\mi-\text{measurable with}\ \ \mi(K)=k\bigg\}
\end{align*}
for $0<h\le f^q$ and $k\in(0,1]$, we easily see by it's evaluation in \cite{4} (which is based only on the evaluation of (\ref{eq1.7}) and calculus arguments) that
\[
\lim_{k\ra0^+}B_q(f,h,k)=0
\]
for any fixed $f,h$ such that $0<h\le f^q$.

Thus we end the one direction of Theorem 2.\ For the other one we argue as follows

Since ii) holds we must have that:
\begin{eqnarray}
\hspace*{1cm}\lim_n\int_{\De_n}(\cm_\ct\phi_n-c\phi_n)^qd\mi=0 \ \ \text{and} \ \
\lim_n\int_{\De'_n}(c\phi_n-\cm_\ct\phi_n)^qd\mi=0,  \label{eq5.15}
\end{eqnarray}
with $\De_n$ and $\De'_n$ defined as above.\  We use now the elementary inequality:
\[
0<x^q-y^q\le(x-y)^q \ \ \text{for any} \ \ x>y>0 \ \ \text{and} \ \ q\in(0,1).
\]
So by (\ref{eq5.15}) we must have that
\[
\lim_n\int_{\De_n}(\cm_\ct\phi_n)^qd\mi=c^q\lim_n\int_{\De_n}\phi^q_nd\mi
\]
by passing if necessary to a subsequence, and analogously for $\De'_n$.
Adding these two equalities we obtain i).
Theorem 2 is now proved. \hs
\section{Further properties of extremal sequences}\label{sec6}
\noindent

In Theorem \ref{thm2.1} we stated an equality which relates the dyadic maximal operator with the Hardy operator in an immediate way.\ This equality involves a free parameter which is the function $G_1$.\ In this section we will prove a part of Theorem \ref{thm2.1} for the case $G_1(x)=x^q$, $q\in(0,1)$ and we will use this proof and the statement of Theorem \ref{thm2.1} to find another characterization of some extremal sequences of certain type for the Bellman function of the dyadic maximal operator in relation with Kolmogorov's inequality.\ We proceed to it as follows.\ We prove the following
\setcounter{lem}{0}
\begin{lem}\label{lem6.1}
For any $g:(0,1]\ra\R^+$ integrable and non-increasing for which the integral on the right hand side of the following inequality is finite, we have that:
\[
\int_X(\cm_\ct\phi)^qd\mi\le\int^1_0\bigg(\frac{1}{t}\int^t_0g(u)du\bigg)^qdt,
\]
for any $\phi:(X,\mi)\ra\R^+$ such that $\phi^\ast=g$.
\end{lem}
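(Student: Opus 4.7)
The plan is to reduce the lemma to a classical pointwise bound on the decreasing rearrangement $(\cm_\ct\phi)^{*}$. Since $\cm_\ct\phi$ and $(\cm_\ct\phi)^{*}$ are equimeasurable and $x\mapsto x^q$ is non-decreasing on $[0,\infty)$, Cavalieri's principle gives
\[
\int_X(\cm_\ct\phi)^q\,d\mi=\int_0^1\bigl[(\cm_\ct\phi)^{*}(t)\bigr]^q\,dt.
\]
It therefore suffices to prove the pointwise bound
\[
(\cm_\ct\phi)^{*}(t)\le\frac{1}{t}\int_0^t g(u)\,du,\qquad t\in(0,1],
\]
after which the monotonicity of $x\mapsto x^q$ and integration in $t$ finish the job.

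To establish this rearrangement bound, I would combine the weak type $(1,1)$ estimate (\ref{eq1.2}) for $\cm_\ct$ with the classical Hardy--Littlewood rearrangement inequality $\int_A\phi\,d\mi\le\int_0^{\mi(A)}\phi^{*}(u)\,du$. For $\la>0$, set $s(\la):=\mi\bigl(\{\cm_\ct\phi>\la\}\bigr)$; then, using $\phi^{*}=g$,
\[
\la\,s(\la)\le\int_{\{\cm_\ct\phi>\la\}}\phi\,d\mi\le\int_0^{s(\la)}g(u)\,du,
\]
whence $\la\le\frac{1}{s(\la)}\int_0^{s(\la)}g$. Since $g$ is non-increasing, the Hardy average $s\mapsto\frac{1}{s}\int_0^s g$ is itself non-increasing on $(0,1]$. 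Consequently, if $\la>\frac{1}{t}\int_0^t g$, then necessarily $s(\la)<t$; equivalently $(\cm_\ct\phi)^{*}(t)\le\la$. Taking the infimum over all such $\la$ yields the claimed pointwise bound.

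The main (and essentially only non-routine) step is the pointwise rearrangement inequality above; its proof rests exclusively on the weak $(1,1)$ property of $\cm_\ct$ (which remains valid in the tree setting, as noted in Section~\ref{sec1}) and the fact that Hardy averaging preserves the class of non-increasing functions. Once this is in place, raising to the $q$-th power and integrating over $(0,1]$ completes the proof of Lemma~\ref{lem6.1}.
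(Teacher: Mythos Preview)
Your argument is correct and uses the same ingredients as the paper: the weak type $(1,1)$ inequality combined with the Hardy--Littlewood bound $\int_{\{\cm_\ct\phi>\la\}}\phi\,d\mi\le\int_0^{s(\la)}g$, followed by the monotonicity of the Hardy average $s\mapsto\frac{1}{s}\int_0^s g$. The only cosmetic difference is that the paper works on the distribution-function side, proving $\bi(\la)\le a(\la)$ (where $a(\la)$ is defined by $\frac{1}{a(\la)}\int_0^{a(\la)}g=\la$) and then integrating via the layer-cake formula, whereas you pass to the dual statement $(\cm_\ct\phi)^*(t)\le\frac{1}{t}\int_0^t g$ on the rearrangement side; these are equivalent formulations of the same comparison, so the proofs are essentially identical.
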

\begin{Proof}
We have that
\setcounter{equation}{0}
\begin{align}
I&=\int_X(\cm_\ct\phi)^qd\mi=q\int^{+\infty}_{\la=0}\la^{q-1}\mi
(\{\cm_\ct\phi\ge\la\})d\la \nonumber \\
&=q\int^f_{\la=0}\la^{q-1}d\la+q\int^{+\infty}_{\la=f}
\la^{q-1}\mi(\{\cm_\ct\phi\ge\la\})d\la,  \label{eq6.1}
\end{align}
where $f=\int^1_0g=\int\limits_X\phi d\mi$, for any $\phi$ such that $\phi^\ast=g$.

Thus $I=f^q+q\int\limits^{+\infty}_{\la=f}\la^{q-1}\bi(\la)d\la$, where $\bi(\la)=\mi(\{\cm_\ct\phi\ge\la\})$.

By inequality (1.2) we see that
\[
\mi(\{\cm_\ct\phi\ge\la\})=\bi(\la)\le\frac{1}{\la}\int_{\{\cm_\ct\phi\ge\la\}}\phi
d\mi, \ \ \text{for any} \ \ \la>f.
\]
Thus
\[
\frac{1}{\bi(\la)}\int_{\{\cm_\ct\phi\ge\la\}}\phi d\mi\ge\la.
\]
Since $\phi^\ast=g$ is non-increasing we have that
\begin{eqnarray}
\int_{\{\cm_\ct\phi\ge\la\}}\phi d\mi\le\int^{\bi(\la)}_0g(u)du.  \label{eq6.2}
\end{eqnarray}

We now choose for any $\la>f$ the unique $a(\la)\in(0,1]$ such that $\dfrac{1}{a(\la)}\intl^{a(\la)}_0g=\la$.\ As a consequence we have that
\begin{eqnarray}
\frac{1}{\bi(\la)}\int^{\bi(\la)}_0g\ge\frac{1}{\bi(\la)}\int_{\{\cm_\ct\phi\ge\la\}}\phi
d\mi\ge\la=\frac{1}{a(\la)}\int^{a(\la)}_0g,  \label{eq6.3}
\end{eqnarray}
and since $g:(0,1]\ra\R^+$ is non-increasing we conclude that $\bi(\la)\le a(\la)$.\ Thus from (\ref{eq6.1}) we derive the following
\begin{align}
I&=\int_X(\cm_\ct\phi)^qd\mi\le f^q+q\int^{+\infty}_{\la=f}\la^{q-1}a(\la)d\la \nonumber \\
&=f^q+\int^{+\infty}_{\la=f}q\la^{q-1}\bigg|\bigg\{t\in(0,1]\cdot
\frac{1}{t}\int^t_0g\ge\la\bigg\}\bigg|d\la,  \label{eq6.4}
\end{align}
from the definition of $a(\la)$.\ So (\ref{eq6.4}) gives
\[
I=\int_X(\cm_\ct\phi)^qd\mi\le\int^1_0\bigg(\frac{1}{t}\int^t_0g\bigg)^qdt,
\]
for any $\phi$ such that $\phi^\ast=g$, which is the result that is stated in our Lemma. \hs
\end{Proof}

We will also need the following.
\begin{lem}\label{lem6.2}
Let $(\phi_n)_n$ be such that $\phi_n:(X,\mi)\ra\R^+$ are measurable rearrangements of $g$ $(\phi^\ast_n=g)$, such that
\begin{eqnarray}
\lim_n\int_X(\cm_\ct\phi_n)^qd\mi=\int^1_0\bigg(\frac{1}{t}\int^t_0g\bigg)^qdt,
\label{eq6.5}
\end{eqnarray}
Then the following is true.\ For any $k\in(0,1]$
\[
\lim_n\int^k_0[(\cm_\ct\phi_n)^\ast(t)]^qdt=\int^k_0\bigg(\frac{1}{t}\int^t_0g\bigg)^qdt.
\]
\end{lem}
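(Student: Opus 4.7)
The plan is to rewrite both sides of the identity in Lemma \ref{lem6.2} via the layer cake formula and to reduce the desired convergence to the one-dimensional distribution function inequality that already underlies Lemma \ref{lem6.1}. Accordingly, let $M_n=\cm_\ct\phi_n$, $\bi_n(\la)=\mi(\{M_n>\la\})$, and let $a(\la)$ be the function already appearing in the proof of Lemma \ref{lem6.1}, defined by $\frac{1}{a(\la)}\intl^{a(\la)}_0 g=\la$ for $\la>f$ and $a(\la)=1$ for $\la\le f$. The first step will be to observe, exactly as in the derivation of (\ref{eq6.3}), that the weak type $(1,1)$ bound for $\cm_\ct$ together with $\phi_n^\ast=g$ gives $\la\,\bi_n(\la)\le\intl^{\bi_n(\la)}_0 g$, and the non-increasing character of $s\mapsto\frac{1}{s}\intl^s_0 g$ then forces $\bi_n(\la)\le a(\la)$ for every $\la>f$. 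For $\la<f$ one has $M_n\ge f$ on $X$ (since $X\in\ct$ and $\intl_X\phi_n d\mi=f$), so $\bi_n(\la)=1=a(\la)$ there.

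The second step is a layer-cake computation. Since $M_n^\ast(t)>\la$ if and only if $t<\bi_n(\la)$, Fubini gives
\[
\int^k_0[M_n^\ast(t)]^q dt=\int^{+\infty}_0 q\la^{q-1}\min(\bi_n(\la),k)\,d\la,
\]
and the identical computation with the non-increasing function $s\mapsto\frac{1}{s}\intl^s_0 g$ in place of $M_n^\ast$ yields
\[
\int^k_0\bigg(\frac{1}{t}\intl^t_0 g\bigg)^q dt=\int^{+\infty}_0 q\la^{q-1}\min(a(\la),k)\,d\la.
\]
Taking $k=1$ and subtracting, the contributions from the range $\la<f$ cancel, and the hypothesis $\lim_n\intl_X(\cm_\ct\phi_n)^qd\mi=\intl^1_0(\frac{1}{t}\intl^t_0 g)^q dt$ translates into $\intl^{+\infty}_f q\la^{q-1}[a(\la)-\bi_n(\la)]\,d\la\longrightarrow 0$ as $n\to\infty$.

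The conclusion for arbitrary $k\in(0,1]$ will then follow from the elementary inequality $\min(a,k)-\min(b,k)\le a-b$, valid for every $a\ge b\ge 0$ and $k\ge 0$, which, applied pointwise in $\la$ with $a=a(\la)$ and $b=\bi_n(\la)$, produces
\[
0\le\int^k_0\bigg(\frac{1}{t}\intl^t_0 g\bigg)^q dt-\int^k_0[M_n^\ast(t)]^q dt\le\int^{+\infty}_f q\la^{q-1}[a(\la)-\bi_n(\la)]\,d\la,
\]
and the right-hand side tends to $0$ by the previous step. This will give the limit in every $k\in(0,1]$, and in fact uniformly in $k$. The only bookkeeping point that requires care is to notice that the range $\la<f$ contributes exactly $kf^q$ to both sides and so cancels out; this reduces the full discrepancy to the tail $\la>f$, which is precisely what the hypothesis $\intl_X(\cm_\ct\phi_n)^qd\mi\to\intl^1_0(\frac{1}{t}\intl^t_0 g)^q dt$ controls. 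I do not foresee any real obstacle in the execution.
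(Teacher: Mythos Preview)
Your argument is correct and cleaner than the paper's. Both proofs start from the same ingredient, namely the pointwise domination $\bi_n(\la)\le a(\la)$ for $\la>f$ coming from the weak $(1,1)$ bound (as in the proof of Lemma \ref{lem6.1}). From there, however, the paper proceeds by introducing the level $\la^{(n)}_k=(\cm_\ct\phi_n)^\ast(k)$, splitting the layer-cake integral at $\la^{(n)}_k$, extracting a convergent subsequence $\la^{(n_i)}_k\to\la_0$, and then assembling a $\liminf$ estimate; the upper bound comes separately, and a standard subsequence argument closes the gap. Your route is more direct: once both sides are written as $\int_0^\infty q\la^{q-1}\min(\cdot,k)\,d\la$, the elementary pointwise bound $0\le\min(a(\la),k)-\min(\bi_n(\la),k)\le a(\la)-\bi_n(\la)$ immediately dominates the $k$-discrepancy by the $k=1$ discrepancy, which is exactly what the hypothesis controls. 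This avoids subsequences entirely and, as you note, yields convergence uniformly in $k\in(0,1]$, which the paper's argument does not explicitly give.

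One cosmetic remark: you take $\bi_n(\la)=\mi(\{M_n>\la\})$ while the paper uses $\ge$, and you define $a(\la)$ only when $\frac{1}{s}\int_0^s g$ actually attains the value $\la$; neither affects the argument, but it is worth saying a word about the case $g(0^+)<\infty$ (then $a(\la)=0$ for large $\la$, and the inequality $\bi_n(\la)\le a(\la)$ still holds since $M_n^\ast(t)\le\frac{1}{t}\int_0^t g$ at the endpoint level).
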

\begin{proof}
We suppose that (\ref{eq6.5}) is true.\ Then in view of the proof of Lemma \ref{lem6.1} we must have that
\begin{eqnarray}
\lim_n\int^{+\infty}_{\la=f}q\la^{q-1}\mi(\{\cm_\ct\phi_n\ge\la\})d\la=
\int^{+\infty}_{\la=f}q\la^{q-1}[a(\la)]d\la, \label{eq6.6}
\end{eqnarray}
where $f=\int^1_0g=\int\limits_X\phi_nd\mi$ and $a(\la)$ is as in the proof of Lemma \ref{lem6.1}.

This means that the following equality should be true
\begin{eqnarray}
\lim_n\int^{+\infty}_{\la=f}q\la^{q-1}|\{(\cm_\ct\phi_n)^\ast\ge\la\}|d\la=\int^{+\infty}_{\la=f}
q\la^{q-1}[a(\la)]d\la,  \label{eq6.7}
\end{eqnarray}
since $\mi(\{\cm_\ct\phi\ge\la\})=|\{(\cm_\ct\phi)^\ast\ge\la\}|$, for any $\la>0$, where $|\cdot|$ denotes the Lesbesgue measure on $(0,1]$.\ Then for any $k\in(0,1]$ we have that
\begin{align}
I_n&=\int^k_0[(\cm_\ct\phi_n)^\ast(t)]^qdt=\int^{+\infty}_{\la=0}q\la^{q-1}
|\{t\in(0,k]:(\cm_\ct\phi_n)^\ast(t)\ge\la\}|d\la \nonumber\\
&=kf^q+\int^{+\infty}_{\la=f}q\la^{q-1}|\{t\in(0,k]:(\cm_\ct\phi_n)^\ast(t)\ge\la\}|d\la.
\label{eq6.8}
\end{align}
We set now $(\cm_\ct\phi_n)^\ast(k)=\la^{(n)}_k\in[f,+\infty)$, for any $n\in\N$.\ For any $t\in(0,k]:(\cm_\ct\phi_n)^\ast(t)\ge(\cm_\ct\phi_n)^\ast(k)=\la^{(n)}_k\Rightarrow
\forall\;\la\in[f,\la^{(n)}_k]$, so we must have that
\begin{eqnarray}
|\{t\in(0,k]:(\cm_\ct\phi_n)^\ast(t)\ge\la\}|=|(0,k]|=k.  \label{eq6.9}
\end{eqnarray}
By (\ref{eq6.8}) and (\ref{eq6.9}) we have that
\begin{align}
I_n=&\,\int^k_0[(\cm_\ct\phi_n)^\ast]^qdt=kf^q+\int^{\la^{(n)}_k}_{\la=f}
q\la^{q-1}\cdot kd\la \nonumber \\
&+\int^{+\infty}_{\la=\la^{(n)}_k}q\la^{q-1}|\{t\in(0,k]:(\cm_\ct\phi_n)^\ast(t)\ge
\la\}|d\la \nonumber\\
=&\,
\,k(\la^{(n)}_k)^q+\int^{+\infty}_{\la=\la^{(n)}_k}q\la^{q-1}|\{t\in(0,1]:(\cm_\ct\phi_n)^\ast
(t)\ge\la\}|d\la,  \label{eq6.10}
\end{align}
by the definition of $\la^{(n)}_k$.\ Thus
\begin{align}
I_n=&\,k(\la^{(n)}_k)^q+\int^{+\infty}_{\la=f}q\la^{q-1}|\{t\in(0,1]:(\cm_\ct\phi_n)^\ast(t)\ge\la\}|d\la \nonumber\\
&-\int^{\la^{(n)}_k}_{\la=f}q\la^{q-1}|\{t\in(0,1]:(\cm_\ct\phi_n)^\ast(t)\ge\la\}|d\la \nonumber \\
=&\,k(\la^{(n)}_k)^q+I_1-I_2, \ \ \text{say}. \label{eq6.11}
\end{align}
Concerning $I_1$ we have that
\begin{eqnarray}
I_1\ra\int^{+\infty}_{\la=f}q\la^{q-1}[a(\la)]d\la, \label{eq6.12}
\end{eqnarray}
as $n\ra\infty$ by the comments in the beginning of the proof of this Lemma.\ About $I_2$ we have that
\begin{align}
I_2&=\int^{\la^{(n)}_k}_{\la=f}q\la^{q-1}|\{t\in(0,1]:(\cm_\ct\phi_n)^\ast(t)\ge\la\}|d\la \nonumber\\
&=\int^{\la^{(n)}_k}_{\la=f}q\la^{q-1}\bi_n(\la),  \ \ \text{where} \label{eq6.13}
\end{align}
\[
\bi_n(\la)=\mi(\{\cm_\ct\phi_n\ge\la\})\le a(\la),
\]
since $\phi^\ast_n=g$ and the proof of Lemma \ref{lem6.1}.\ as a consequence
\begin{eqnarray}
I_2\le\int^{\la^{(n)}_k}_{\la=f}q\la^{q-1}a(\la)d\la.  \label{6.14}
\end{eqnarray}
Thus
\begin{eqnarray}
\underset{n}{\lim\inf}J_n\ge\int^{+\infty}_{\la=f}q\la^{q-1}[a(\la)]d\la+
\underset{n}{\lim\inf}\bigg[k(\la^{(n)}_k)^q-\int^{\la^{(n)}_k}_{\la=f}q
\la^{q-1}a(\la)d\la\bigg].\hspace*{-2cm}  \label{eq6.15}
\end{eqnarray}
Since now $k$ is fixed and positive and because of the fact that $\dis\sup_n\int\limits_X(\cm_\ct\phi_n)^qd\mi<+\infty$ (which can be proved by using Lemma \ref{lem6.1}), we conclude that $(\la^{(n)}_k)_n$ is bounded above.\ Thus there exists a subsequence and a $\la_0\ge f$ such that $\la^{(n_i)}_k\ra\la_0$, as $i\ra\infty$.\ Thus
\begin{align}
\underset{i}{\lim\inf}J_{n_i}\ge&\,\underset{i}{\lim\inf}J_n\ge
\int^{+\infty}_{\la=f}q\la^{q-1}[a(\la)]d\la+k\la^q_0
-\int^{\la_0}_{\la=f}q\la^{q-1}[a(\la)]d\la \nonumber \\
=&\,k\la^q_0+\int^{+\infty}_{\la=\la_0}q\la^{q-1}[a(\la)]d\la \nonumber\\
=&\,k\la^q_0+\int^{+\infty}_{\la=\la_0}q\la^{q-1}\bigg|\bigg\{t\in(0,1]:\frac{1}
{t}\int^t_0g\ge\la\bigg\}\bigg|d\la \nonumber \\
\ge&\,\int^{\la_0}_{\la=0}q\la^{q-1}kd\la+\int^{+\infty}_{\la=\la_0}\bigg|\bigg\{
t\in(0,k]:\frac{1}{t}\int^t_0g\ge\la\bigg\}\bigg|d\la \nonumber\\
\ge&\,\int^{+\infty}_{\la=0}q\la^{q-1}\bigg|\bigg\{t\in(0,k]:\frac{1}{t}\int^t_0g\ge\la
\bigg\}\bigg|d\la
=\int^k_0\bigg(\frac{1}{t}\int^t_0g\bigg)^qdt.  \label{eq6.16}
\end{align}
That is we proved that for any fixed $k\in(0,1]$ there is a subsequence of integers $(n_j)_j$ such that
\[
\lim_n\int^k_0[(\cm_\ct\phi_{n_j})^\ast]^qdt\ge\int^k_0\bigg(\frac{1}{t}\int^t_0
g\bigg)^qdt.
\]
This consequence, Lemma \ref{lem6.1}, and standard arguments about subsequences give the result we need.
\end{proof}

We are now able to prove the main theorem of this section.
\begin{thm}\label{thm6.1}
Let $g:(0,1]\ra\R^+$ be an integrable, non-increasing function such that $\int^1_0\Big(\dfrac{1}{t}\int^t_0g\Big)^qdt<+\infty$ where $q\in[0,1)$ and $(\phi_n)$ is a sequence of $\mi$-measurable rearrangements of $g$ $(\phi^\ast_n=g)$ such that
\[
\lim_n\int_X(\cm_\ct\phi_n)^qd\mi=\int^1_0\bigg(\frac{1}{t}\int^t_0g\bigg)^qdt.
\]
Then the following equality is true
\[
\lim_n\int^1_0\bigg|(\cm_\ct\phi_n)^\ast(t)-\frac{1}{t}\int^t_0g\bigg|^qdt=0.
\]
\end{thm}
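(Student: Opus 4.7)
The plan is to establish the pointwise domination $F_n(t) := (\cm_\ct\phi_n)^\ast(t) \le G(t)$, where $G(t) := \frac{1}{t}\int_0^t g\,du$, and then to deduce $L^q$ convergence of $F_n$ to $G$ from Helly's selection theorem combined with the dominated convergence theorem. The hypothesis is used only through the integral relation $\int_0^1 F_n^q\,dt \to \int_0^1 G^q\,dt$, which follows from the equimeasurability $\int_X (\cm_\ct\phi_n)^q\,d\mi = \int_0^1 F_n^q\,dt$.

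First I would verify that $F_n \le G$ a.e. on $(0,1]$ for every $n$. Repeating the reasoning already used inside the proof of Lemma \ref{lem6.1}: for each $\la > f$ the distribution function $\bi_n(\la) := \mi(\{\cm_\ct\phi_n \ge \la\})$ satisfies, by the weak type $(1,1)$ inequality and the fact that $\phi_n^\ast = g$,
\[
\la\,\bi_n(\la) \le \int_{\{\cm_\ct\phi_n \ge \la\}} \phi_n\,d\mi \le \int_0^{\bi_n(\la)} g(u)\,du = \bi_n(\la)\,G(\bi_n(\la)),
\]
so $\la \le G(\bi_n(\la))$. Since $g$ non-increasing forces $G$ non-increasing, this gives $\bi_n(\la) \le |\{t \in (0,1] : G(t) \ge \la\}|$, i.e.\ the distribution function of $\cm_\ct\phi_n$ is dominated by that of $G$; taking decreasing rearrangements yields $F_n(t) \le G^\ast(t) = G(t)$ a.e. For $\la \le f$ the bound is automatic since $\cm_\ct\phi_n \ge f$ everywhere and $G(1) = f$.

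Next I would apply Helly's selection theorem. Each $F_n$ is non-increasing on $(0,1]$ with $0 \le F_n \le G$, and $G$ is locally bounded on $(0,1]$, so a diagonal argument over a dense sequence of points produces, from any given subsequence, a further subsequence $F_{n_j}$ converging pointwise a.e. to some non-increasing $F \le G$. Since $F_{n_j}^q \le G^q \in L^1(0,1)$ by hypothesis, dominated convergence along $(n_j)$ combined with $\int_0^1 F_n^q\,dt \to \int_0^1 G^q\,dt$ gives $\int_0^1 F^q\,dt = \int_0^1 G^q\,dt$. Combined with $F \le G$ a.e., this forces $F = G$ a.e.

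Finally, apply dominated convergence to $|F_{n_j} - G|^q$, which tends to $0$ pointwise a.e. and is bounded by $2^q G^q \in L^1(0,1)$. This produces $\int_0^1 |F_{n_j} - G|^q\,dt \to 0$ along the subsequence, and the standard subsequence principle then upgrades the convergence to the full sequence, establishing the theorem. The main obstacle is the first step: transforming the integral level-set estimate $\la \le G(\bi_n(\la))$ into a genuine pointwise inequality between rearrangements, which requires care when $G$ fails to be strictly decreasing (so that the function $a(\la)$ of Lemma \ref{lem6.1} is not single-valued); phrasing the argument through distribution functions rather than through a particular choice of inverse avoids any genuine difficulty.
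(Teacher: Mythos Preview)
Your proof is correct and takes a genuinely different---and cleaner---route than the paper's. The key observation you make, that the level-set inequality $\lambda \le G(\beta_n(\lambda))$ coming from the weak-type estimate upgrades to the pointwise domination $(\cm_\ct\phi_n)^\ast \le G$ on $(0,1]$, is not stated explicitly in the paper, although it is implicit in the inequality $\beta_n(\lambda)\le a(\lambda)$ already proved inside Lemma~\ref{lem6.1}. Your handling of the non-strictly-decreasing case via distribution functions is fine; indeed, once one has $\lambda \le G(\beta_n(\lambda))$, a two-line contradiction argument gives $(\cm_\ct\phi_n)^\ast(t)\le G(t)$ directly: if $(\cm_\ct\phi_n)^\ast(t_0)>G(t_0)$, pick $\lambda$ strictly between them; then $t_0$ lies in the level set $\{(\cm_\ct\phi_n)^\ast>\lambda\}$, which has measure $\beta_n(\lambda)$, so $t_0\le\beta_n(\lambda)$ and hence $G(t_0)\ge G(\beta_n(\lambda))\ge\lambda$, contradicting the choice of $\lambda$.

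The paper instead first proves the auxiliary Lemma~\ref{lem6.2}, that $\int_0^k [(\cm_\ct\phi_n)^\ast]^q\,dt \to \int_0^k G^q\,dt$ for every $k$, then splits $(0,1]$ into the set $F_n=\{(\cm_\ct\phi_n)^\ast>G\}$ and its complement, approximates $F_n$ from outside by open sets, applies Lemma~\ref{lem6.2} on the constituent intervals, and finally invokes Lemma~\ref{lem5.2}. Your pointwise bound shows that $F_n$ actually has measure zero, so this decomposition is unnecessary; your Helly/dominated-convergence argument bypasses both Lemma~\ref{lem6.2} and the open-set approximation entirely. The trade-off is that the paper obtains Lemma~\ref{lem6.2} as an independent statement of possible separate interest, whereas your route reaches the theorem more directly. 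One small remark: since $F_n\le G$, you have $|F_n-G|=G-F_n\le G$, so $G^q$ already serves as a dominating function and the factor $2^q$ is not needed.
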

\begin{proof}
We consider the set
\[
F_n=\bigg\{t\in(0,1]:(\cm_\ct\phi_n)^\ast(t)>\frac{1}{t}\int^t_0g\bigg\}
\]
and it's complement in $(0,1]$, $F^c_n$.

We will prove that
\begin{eqnarray}
\lim_n\bigg|\int_{X_n}[(\cm_\ct\phi_n)^\ast(t)]^qdt-\int_{X_n}\bigg(\frac{1}{t}
\int^t_0g\bigg)^qdt\bigg|=0,  \label{eq6.17}
\end{eqnarray}
where $X_n$ is either $F_n$, for every $n\in\N$, or $F^c_n$, $\forall\;n\in\N$.\ If we have (\ref{eq6.17}) in both cases for $X_n$  and apply Lemma \ref{lem5.2}, then we have the result we need to prove.\ We will prove (\ref{eq6.7}) only in the case where $X_n=F_n$, $\forall\;n\in\N$.\ The other one is treated in a similar way.

For every $n\in\N$ we choose $U_n$ to be an open subset of $(0,1]$ such that $F_n\subseteq U_n$ and $|U_n\setminus F_n|\le\dfrac{1}{n}$.\ Then $U_n$ can be written as $U_n=\bigcup_k(a^{(n)}_k,b^{(n)}_k)$, that is a disjoint union of open intervals on $(0,1]$.\ By Lemma \ref{lem6.2} and since the above union is disjoint we have that
\begin{eqnarray}
\lim_n\bigg|\int_{U_n}[(\cm_\ct\phi_n)^\ast(t)]^qdt-\int_{U_n}\bigg(
\frac{1}{t}\int^t_0g\bigg)^qdt\bigg|=0.  \label{eq6.18}
\end{eqnarray}
Moreover
\[
\int_{U_n\setminus F_n}[(\cm_\ct\phi_n)^\ast(t)]^qdt\le\int^{|U_n\setminus F_n|}_0
[(\cm_\ct\phi_n)^\ast(t)]^qdt,
\]
for any $n\in\N$, since $(\cm_\ct\phi_n)^\ast$ is non-increasing on $(0,1]$.

Additionally by Theorem \ref{thm2.1} it is easy to see that
\[
\int^{|U_n\setminus F_n|}_0[(\cm_\ct\phi_n)^\ast(t)]^qdt\le\int^{|U_n\setminus F_n|}_0
[H(t)]^qdt, \ \ \text{where} \ \ H(t)=\frac{1}{t}\int^t_0g.
\]
The last integral now tends to zero since by our hypothesis $H(t)\in L^q((0,1])$.\ For the same reasons we have that
\[
\int_{U_n\setminus F_n}\bigg(\frac{1}{t}\int^t_0g\bigg)^qdt\ra0, \ \ \text{as} \ \ n\ra\infty
\]
for the same reasons.\ Combining the above results we conclude, by using also (\ref{eq6.18}), the proof of Theorem \ref{thm6.1}.
\end{proof}
\begin{rem}\label{rem6.1}
By using the elementary inequality $x^q-y^q<(x-y)^q$, for $x>y>0$, it is easy to see that the converse statement of Theorem \ref{thm6.1} is true.\ That is any sequence satisfying
\[
\lim_n\int^1_0\bigg|(\cm_\ct\phi_n)^\ast(t)-\frac{1}{t}\int^t_0g\bigg|^qdt=0,
\]
must also satisfy:
\[
\lim_n\int_X(\cm_\ct\phi_n)^qd\mi=\int^1_0\bigg(\frac{1}{t}\int^t_0g\bigg)^qdt.
\]
\end{rem}
\begin{cor}\label{cor6.1}
Let $g$ be as in Section \ref{sec4}, Lemma \ref{lem6.1}.\ Then for any sequence $(\phi_n)$ of rearrangements of $g$ such that $\int\limits_X(\cm_\ct\phi_n)^qd\mi\ra h\oo_q(f^q/h)$, we must have that
\[
\int^1_0|(\cm_\ct\phi_n)^\ast(t)-cg(t)|^qdt\ra0, \ \ \text{as} \ \ n\ra\infty \ \ \text{where} \ \ c=\oo_q(f^q/h)^{1/q}.
\]
\end{cor}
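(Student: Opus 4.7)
The plan is to recognize this as a direct specialization of Theorem \ref{thm6.1} to the specific extremizing profile $g$ constructed in Lemma \ref{lem4.1}.

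First I would recall from Lemma \ref{lem4.1} that the function $g(t)=Kt^{-1+1/c}$, with $c=\oo_q(f^q/h)^{1/q}$ and $K=f/c$, is non-increasing and continuous on $(0,1]$, satisfies $\int_0^1 g=f$, $\int_0^1 g^q=h$, and obeys the pointwise identity
\[
\frac{1}{t}\int_0^t g(u)\,du = cg(t), \qquad t\in(0,1].
\]
Raising this identity to the power $q$ and integrating, I get
\[
\int_0^1\bigg(\frac{1}{t}\int_0^t g\bigg)^q dt = c^q\int_0^1 g(t)^q\,dt = c^q h = h\oo_q(f^q/h),
\]
so in particular the right-hand side is finite and the hypothesis of Theorem \ref{thm6.1} on the integrability of $H(t)=\tfrac{1}{t}\int_0^t g$ is satisfied.

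Next, the assumption on $(\phi_n)$ reads exactly $\lim_n\int_X(\cm_\ct\phi_n)^q d\mi = h\oo_q(f^q/h)$, which by the computation above is the same as $\lim_n\int_X(\cm_\ct\phi_n)^q d\mi = \int_0^1(\tfrac{1}{t}\int_0^t g)^q dt$. Since each $\phi_n$ is a rearrangement of $g$ (that is $\phi_n^\ast=g$), the full hypothesis of Theorem \ref{thm6.1} is in force.

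Applying Theorem \ref{thm6.1} then yields
\[
\lim_n\int_0^1\bigg|(\cm_\ct\phi_n)^\ast(t)-\frac{1}{t}\int_0^t g\bigg|^q dt = 0,
\]
and substituting $\tfrac{1}{t}\int_0^t g = cg(t)$ gives the conclusion. There is no real obstacle: the only thing to check is the numerical matching between the extremal value $h\oo_q(f^q/h)$ appearing in the Bellman formulation and the Hardy-operator integral $\int_0^1(\tfrac{1}{t}\int_0^t g)^q dt$ appearing in Theorem \ref{thm6.1}, and this is handled by the defining property of $g$ from Lemma \ref{lem4.1}.
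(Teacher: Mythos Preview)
Your proof is correct and is exactly the intended argument: the paper's own proof reads simply ``Immediate,'' and what you have written is precisely the unpacking of that word---apply Theorem \ref{thm6.1} to the specific $g$ of Lemma \ref{lem4.1}, using the identity $\tfrac{1}{t}\int_0^t g=cg(t)$ both to match the limiting value $h\oo_q(f^q/h)$ with $\int_0^1\big(\tfrac{1}{t}\int_0^t g\big)^q dt$ and to rewrite the conclusion.
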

\begin{proof}
Immediate.
\end{proof}

Nikolidakis Eleftherios, Post-doctoral researcher, National and Kapodistrian University of Athens, Department of Mathematics, Panepistimioupolis GR-157 84, Athens, Greece.

E-mail address:lefteris@math.uoc.gr


\begin{thebibliography}{99}
\bibitem{1} D. L. BURKHOLDER, Martingales and Fourier analysis in Banach spaces, C.I.M.E. Lectures (Varenna (Como), Italy, 1985), {\em Lecture Notes in Mathematics} 1206 (1986), 61-108, MR864712 (88c:42017).
%
\bibitem{2} D. L. BURKHOLDER, Boundary value problems and sharp inequalities for martingale transforms, {\em Ann. of Prob.} {\bf 12} (1984), 647-702, MR744226 (86b:60080).
%
\bibitem{3} A. D. MELAS, The Bellman dunctions of dyadic-like maximal operators and related inequalities, {\em Adv. in Math.} {\bf 192} (2005), 310-340, MR2128702 (2005k:42052).
%
\bibitem{4} A. D. MELAS, E. N. NIKOLIDAKIS, Dyadic-like maximal operators on integrable functions and Bellman function related to Kolmogorov's inequality, {\em Trans. Am. Math. Soc}. Vol. 362, No. 3, March 2010, p. 1571-1597.
%
\bibitem{5} F. NAZAROV, S. TREIL, The hunt for a Bellman function: Applications to estimates for singular integral operators and to other classical problems of harmonic analysis, {\em Algebra $i$ Analiz} {\bf 8} no. 5 (1996), 32-162. MR14289(99d:42026).
%
\bibitem{6} F. NAZAROV, S. TREIL, A. VOLBERG, The Bellman functions and two-weight inequalities for Haar multipliers. {\em Journ. Amer. Math. Soc.} {\bf 12} no. 4 (1999), 909-928. MR1685781 (2000k:42009).
%
\bibitem{7} E. N. NIKOLIDAKIS, A. D. MELAS, A sharp integral rearrangement inequality for the dyadic maximal operator and applications, arXiv: 1305.2521. Submitted
%
\bibitem{8} E.\ N.\ NIKOLIDAKIS, Optimal weak type estimates for dyadic-like maximal operators. {\em Ann.\ Acad.\ Scient.\ Fenn.\ Math}.\ {\bf38}, (2013), 229-244.
%
\bibitem{9} E.\ N.\ NIKOLIDAKIS, Sharp, weak type inequalities for the dyadic maximal operator.\ {\em Journal of Fourier Anal. and Appl}. (2013) {\bf 19}: 115-139.
%
\bibitem{10} E. N. NIKOLIDAKIS, The geometry of the dyadic maximal operator. To appear in {\em Rev. Mat. Iber.} , arXiv:1109.4785.
%
\bibitem{11} L. SLAVIN, A. STOKOLOS, V. VASYUNIN, Monge-Amp\`{e}re equations and Bellman functions: The dyadic maximal operator, {\em C.R.Math.Acad.Sci.Paris
Ser.I} 346, (2008), 585-588.
%
\bibitem{12} L. SLAVIN, A. VOLBERG, The explicit BF for a dyadic Chang-Wilson-Wolff theorem. The $s$-function and the exponential integral. {\em Contemp. Math}. {\bf444}. Amer. Math. Soc., Providence, RI, 2007, MR2423630.
%
\bibitem{13} V. VASYUNIN, The sharp constant in the reverse Holder inequality for Muckenhoupt weight. {\em Algebra $i$ Analiz}, {\bf 15} (2003), no.1, 73-117 MR1979718 (2004h:42017).
%
\bibitem{14} V. VASYUNIN, A. VOLBERG. The Bellman functions for the simplest two weight inequality: The case study. {\em St. Petersburg Math. J}., {\bf 18} (2007), No. 2 pp 200-222.
%
\bibitem{15} V. VASYUNIN, A. VOLBERG, Monge-Amp\`{e}re equation and Bellman optimization of Carleson embedding theorems, Linear and complex analysis, 195238,{\em Amer. Math. Soc. Transl. Ser. 2, 226, Amer. Math. Soc.}, Providence, RI, 2009.
%
\bibitem{16} G.\ WANG, Sharp maximal inequalities for conditionally symmetric martingales and Brownian motion. {\em Proc.\ Amer.\ Math.\ Soc}.\ {\bf112} (1991) 578-586.
\end{thebibliography}
\end{document}